\def\squarebox#1{\hbox to #1{\hfill\vbox to #1{\vfill}}}
\DeclareMathOperator{\grad}{grad}
\newcommand{\1}{{\bold 1}}
\theoremstyle{plain}
\newtheorem{thm}{Theorem}
\newtheorem{cor}{Corollary}
\newtheorem{lem}{Lemma}
\newtheorem{rem}{Remark}
\newtheorem{prop}{Proposition}
\newtheorem{defn}{Definition}
\renewcommand{\thelem}{\thesection.\arabic{lem}}
\renewcommand{\theequation}{\thesection.\arabic{equation}}
\def\pu{\partial_t u}
\def\la{\langle}
\def\ra{\rangle}
\def\12{\frac{1}{2}}
\def\phi{\varphi}
\def\epsilon{\varepsilon}
\def\kappa{\varkappa}
\def\ep{\epsilon}
\def\lr#1{\langle{#1}\rangle}
\def\dif{\partial}
\def\al{\alpha}
\def\be{\beta}
\def\gam{\gamma}
\def\R{{\mathbb R}}
\numberwithin{equation}{section}
\begin{document}

\def\R {{\mathbb{R}}}
\def\N {{\mathbb{N}}}
\def\C {{\mathbb{C}}}
\def\Z {{\mathbb{Z}}}
\def\T{{\mathbb T}}
\def\Q{{\mathbb Q}}
\def\SP{{\mathbb S}}
\def\mc{{\mathcal H}}
\def\1b{{\mathbb I}}
\def\pu{\partial_t u}
\def\pt{\partial_t}
\def\pa{\partial}

 \vbadness=10000
 \hbadness=10000

\def\phi{\varphi}
\def\epsilon{\varepsilon}
\def\kappa{\varkappa}
\def\eT{e^{-\lambda T}}
\def\ii{{\bf i}}
\def\hh{\hat{x}}
\def\hhx{\hat{\xi}}
\def\12{\frac{1}{2}}
\def\Rc{{\mathcal R}}
\def\tE{\tilde{E}}
\def\ep{\epsilon}
\def\la{\langle}
\def\ra{\rangle}
\def\co{{\mathcal O}}
\def\pa{\partial}

\def\Qed{\qed\par\medskip\noindent}

\title[Cauchy problem for hyperbolic operators ]{Cauchy problem for
  effectively hyperbolic operators with triple characteristics} 
\date{}
\author[T. Nishitani] {Tatsuo Nishitani}

\address{Departement of Mathematics, Osaka University, Machikaneyama 1-1, Toyonaka 560-0043, Japan}
\email{nishitani@math.sci.osaka-u.ac.jp}
\author[V. Petkov]{Vesselin Petkov}
\address{Institut de Math\'ematiques de Bordeaux, 351,
Cours de la Lib\'eration, 33405  Talence, France}
\email{petkov@math.u-bordeaux.fr}

\begin{abstract} We study the Cauchy problem for effectively hyperbolic operators $P$ with principal symbol $p(t, x,\tau,\xi)$ having triple characteristics on $t = 0$. Under a condition (E) we show that such operators are strongly hyperbolic, that is the Cauchy problem is well posed for $p(t, x,D_t, D_x) + Q(t, x, D_t, D_x)$ with arbitrary 
lower order term $Q$. The proof is based on energy estimates with weight $t^{-N}$ for a first order pseudo-differential system, where $N$ depends on lower order terms. For our analysis we construct a non-negative definite symmetrizer $S(t)$ and we prove a version of Fefferman-Phong type inequality for ${\mathsf{Re}}\, (S(t)U, U)_{L^2(\R^n)}$ with a lower bound $-C t^{-1}\|\lr{D}^{-1}U\|_{L^2(\R^n)}$.

\end{abstract}
\keywords{Cauchy Problem, Effectively Hyperbolic Operators, Triple
  Characteristics, Energy Estimates}
\vspace{0.5cm}
\maketitle
\section{Introduction}
Consider a differential operator
 $$P(t, x, D_t, D_x) = \sum _{\alpha + |\beta| \leq m} c_{\alpha, \beta} (t, x) D_t^{\alpha} D_x^{\beta},\;\; D_t = -i\partial_t,\;\; D_{x_j} = -i\partial_{x_j}$$ 
 of order $m$ with smooth coefficients $c_{\alpha, \beta}(t,x),\: t \in \R,\:x \in \R^n.$ Denote by 
$$p(t, x, \tau, \xi) = \sum_{\alpha + |\beta| = m} c_{\alpha, \beta} (t, x) \tau^{\alpha} \xi^{\beta}$$
 the principal symbol of $P$.
Let $U \subset \R^{n + 1}$ be an open set and let 
$$U_{\eta}^{-} = U \cap \{t \leq \eta\},\quad U_{\eta}^{+} = U \cap \{t \geq \eta\},\quad U_{\eta}=U\cap\{t=\eta\}.
$$
 We say that $P$ is hyperbolic with respect to $N_0 =(1,0,\ldots,0)$ at $(t_0, x_0)$ if
 \begin{itemize}
 \item[\rm(i)]  $ p(t_0, x_0, N_0) \neq 0$, 
\item[\rm(ii)]
the equation 
\begin{equation} \label{eq:1.1}
p(t_0, x_0, \tau, \xi) = 0
\end{equation}
 with respect to $\tau $ has only real roots  $\tau = \lambda_j(t_0, x_0, \xi)$ for all $\xi \in \R^n.$
 \end{itemize}
\begin{defn} We say that the Cauchy problem
\begin{equation} \label{eq:1.2}
Pu = f\quad  {\rm in}\quad U_T^{+} ,\quad D_t^ju\big|_{U_T}=0\quad (j=0,\ldots,m-1)
\end{equation}
is well posed in $U_T^{+}$ if
\begin{itemize}
\item[\rm(i)] (existence) for every $f \in C_0^{\infty}(U_T^{+})$ there
exists a solution $u \in C^m(U_T^{+})$ satisfying \eqref{eq:1.2}.\\
\item[\rm(ii)] (uniqueness) if $u \in C^m(U_T^{+})$ satisfies \eqref{eq:1.2}, then for every $s, s> T,$ if $Pu = 0$ in $U_s^{-}$, then $u = 0$ in $U_s^{-}$.
\end{itemize}
\end{defn}
Let $\Omega\subset \R^{n+1}$ be an open set and let $G =  \Omega \cap \{0 \leq t \leq T\}$.
A necessary condition for the well-posedness of the Cauchy problem  in $G$ is the hyperbolicity of the operator $P$ at every point  $(t, x) \in G.$
\begin{defn} We say that the operator $P$ with principal symbol $p_m$ is strongly hyperbolic in $G$ if for every point $z_0 = (t_0, x_0) \in G$ there exist a neighborhood $U$ of $z_0$  and $T_0 \geq 0$ ($T_0 < t_0$ if $t_0 > 0$ and $T_0= 0$ if $t_0 = 0$) such that \eqref{eq:1.2} is well posed in $(U\cap\{t<T\})^{+}_s$ for every $T_0 \leq s < T$ and every operator $L = p(t, x, D_t, D_x) + Q_{m-1}(t, x, D_t, D_x)$, $Q_{m-1}$ being any  operator of order less or equal to $m-1$.
\end{defn}
 Set $x = (x_0, x_1,\ldots, x_n)$, $\xi = (\xi_0, \xi_1,\ldots,\xi_n)=(\xi_0,\xi')$ with $x_0 = t,\: \xi_0 = \tau$  and consider the fundamental matrix $F_{p}$ of the principal symbol $p$

$$F_{p} (x, \xi) = \left(\begin{matrix} p_{\xi, x}(x, \xi) & &  p_{\xi, \xi}(x, \xi)\\
-p_{x, x}(x, \xi) & &  - p_{x,
  \xi}(x,\xi)\end{matrix}\right),
$$
introduced in \cite{IP}.
 If $P$ is hyperbolic in $G$ and $z$ is a critical point of $p$, then $F_{p}(z)$ has at most two non-vanishing real simple eigenvalues $\mu$ and $-\mu$ and all other eigenvalues are contained in $i \R$. 
It was also proved in \cite{IP} a necessary condition for $P$ to be strongly hyperbolic in $G$. Namely this condition says that at every  critical point $z$ of $p$  the fundamental matrix $F_{p}(z)$ has two non-zero real eigenvalues. Moreover, if $P$ is strongly hyperbolic in $G$ then for $(x,\xi') \in (\Omega\cap\{0<x_0<T\})  \times (\R^n \setminus \{0\})$ the multiplicities of the roots of \eqref{eq:1.1} are not greater than two, and for $(x,\xi') \in (\Omega\,\cap\{x_0=0\}) \times (\R^{n}\setminus \{0\})$ or for $(x,\xi') \in (\Omega\cap\{x_0=T\}) \times (\R^{n} \setminus \{0\})$ these multiplicities are not greater than three (see \cite{IP} for more details).
\begin{defn}
A hyperbolic operator is called {\it effectively hyperbolic}, if the matrix $F_{p}(z)$ has non-vanishing real eigenvalues at every critical point $z$ of $p$.
\end{defn}
 It was conjectured \cite{IP} that a hyperbolic operator with at most triple characteristics is strongly hyperbolic if and only if it is effectively hyperbolic. For operators with double characteristics the sufficiency of this condition has been proved by Iwasaki \cite{Iw1}, \cite{Iw2} and by the first author \cite{N1} (see also \cite{N2} for another proof). Some results for special class of operators have been obtained by Oleinik \cite{O}, Ivrii \cite{I}, H\"ormander \cite{H1} and Melrose \cite{M}.\\

The analysis of effectively hyperbolic operators with triple characteristics is more complicated. As we mentioned above such operators could have triple characteristics only for $t = 0$ or $t = T$. If this happens for $t = 0$, the operator is hyperbolic only for $t \geq 0$ and for $t < 0$ it has complex characteristics. In \cite{I} it was investigated the case when the principal symbol $p_3$ is smoothly factorized
$$p_3= ((\tau - c(t, x, \xi))^2- q(t, x,\xi)) (\tau - \lambda(t, x, \xi))$$
with smooth root $\lambda(t, x,\xi)$ and smooth $c(t, x,\xi)$ and $q(t, x, \xi) \geq 0.$
The proof is based  on  construction of a parametrix for the factors and for this the factorization is crucial. On the other hand, there are simple examples of effectively hyperbolic operators with triple characteristics whose principal symbol cannot be  factorized (see \cite{BBP}).\\

Let $U \subset \R^n$ be an open domain and let $G = [0, T] \times U$. In this paper we study an effectively hyperbolic operator $P$ of third order 
\begin{equation} 
\label{1.3}
\begin{split}
P  =  D_t^3 +   q_1(t, x, D_x)D_t^2 +q_2(t, x, D_x)D_t + q_3(t, x, D_x) \\
+  r_2(t,x, D_x) + r_1(t, x, D_x) D_t + r_0(t, x)D_t^2 \\ 
+ m_1(t, x, D_x) + m_0(t, x)D_t + c_0(t, x)
\end{split}
\end{equation}
with triple characteristics points lying on $t = 0$. Here $q_j,\: j = 1,2,3$ are operators of order $j$ and $r_j, m_j$ are operators of order $j$. Let $p(t, x, \tau, \xi)$ be the principal symbol of $P$. Consider the set
$$\Sigma_3 = \{\zeta=  (t, x, \tau, \xi):\: \frac{\pa^j p}{\pa \tau^j}(\zeta) = 0, \: j = 0,1,2\} \subset \{(0, x,\tau, \xi)\in T^*(G)\setminus \{0\}\}$$ 
 of triple characteristics points of $P$ lying on $t = 0$.  
Then $P$ is effectively hyperbolic if and only if (see \cite{IP}) we have
$$\frac{\pa^2 p}{\pa t \pa \tau}\big\vert_{\Sigma_3}  < 0.$$ 
Introduce the symbols 
$$\Delta_1  = 27 q_3 - 9q_1 q_2 + 2 q_1^3,\;\; \Delta_0 = q_1^2 - 3 q_2, \;\; \Delta =  -\frac{1}{27 \la \xi \ra^6} (\Delta_1^2 - 4 \Delta_0^3),$$
where $\la \xi \ra = ( 1 + |\xi|^2)^{1/2}.$
The symbol $\Delta \la \xi \ra ^6$ is the {\it discriminant} of the equation $p = 0$ with respect to $\tau$ and we have
three real roots for $t \geq 0$ if and only if $\Delta \geq 0.$ The
symbol $\Delta_0$ is the {\it discriminant} of the equation
$\pa_{\tau} p = 3 \tau^2 + 2 q_1 \tau + q_2 = 0$ with respect to $\tau$ and if we have a
triple root at $\rho = (0, x_0, \xi)$, we get $\Delta(\rho) =\Delta_0(\rho) = 0.$

After a change of variables preserving $t$, an effectively hyperbolic operators of third order with triple characteristics points lying on $t = 0$ can be transformed into  a pseudo-differential operator $P$ with principal  symbol (see \cite{BBP})
\begin{equation} \label{eq:1.4}
p(t, x, \tau, \xi) = \tau^3 -(t + \alpha(x,\xi))\tilde{a}(t, x, \xi)\la \xi \ra^2\tau  + b(t, x, \xi)\la \xi \ra ^3,\;\; t \in [0, T], \;\; (x, \xi) \in T^* (U),
\end{equation}
where   $\alpha(x, \xi),\: \tilde{a}(t, x, \xi),\:b(t, x,\xi)$ are real-valued symbols  homogeneous of degree 0 in $\xi$  satisfying
$$
\tilde{a}(t, x,\xi) \geq c_0 > 0 \;\; {\rm for}\;\;(t, x, \xi) \in [0, T] \times T^*(U),
$$
$$\alpha(x, \xi) \geq 0,\;\;(x,\xi) \in T^*(U).
$$
 Set
$a(t, x, \xi) = (t + \alpha)\tilde{a}(t, x, \xi).$ Since $P$ is hyperbolic for $ t \geq 0$, the symbol $\Delta$  satisfies the inequality
\begin{equation} \label{eq:1.5}
\Delta(t, x,\xi):  = \Bigl(4 a^3(t, x, \xi)- 27 b^2(t, x, \xi)\Bigr) \geq 0, \;\; (t, x,\xi) \in [0, T] \times T^*(U)
\end{equation}
and this guarantees that the equation $p = 0$ with respect to $\tau$ has only real roots for $(t, x,\xi) \in [0, T] \times T^*(U)$. The set $\Sigma_3$ has the form
$$\Sigma_3 = \{(t, x, \xi):\: t = \tau = 0, \: \alpha(x, \xi) = 0\}.$$
We can write the symbol $b(t, x,\xi)$ as
$$b(t, x, \xi) = \beta_0(x, \xi) + t \beta_1(x, \xi)+ t^2 \beta_2(t, x, \xi).$$
The hyperbolicity of $P$ implies
$$\beta_0\big\vert_{\Sigma_3} =\beta_1\big\vert_{\Sigma_3} = 0.$$
Under the stronger assumption that $\beta_0(x,\xi)$ and $\beta_1(x, \xi)$ vanish in a small neighborhood of every point of $\Sigma_3$ it has been proved in \cite{BBP} that $P$ is strongly hyperbolic.\\

In this paper we prove that $P$ is strongly hyperbolic under weaker assumption (E) (see section 2) saying that microlocally we have
\begin{equation} \label{eq:1.6}
\Delta(t, x, \xi) \geq \delta t ( t + \alpha(x, \xi))^2, \;\; \delta > 0.
\end{equation}
According to Lemma 2.1, this condition is satisfied under an estimate on $\beta_1$. This generalizes the result in \cite{BBP}. 
Since the symbols $\Delta$ and $\Delta_0$ are invariant under the change of variables preserving $t$, it follows that (E) is equivalent to the condition
$$\Delta(t, x, \xi) \geq \delta_1 t\Delta_0(t, x, \xi),\;\; \delta_1 > 0,\;\;|\xi|= 1.$$

To understand the condition (\ref{eq:1.6}), consider the case of a hyperbolic operator $P$ with double characteristic with principal symbol
$$p_2 =\tau^2 - q(t, x,\xi),$$
where $q(t, x, \xi) \geq  0$. The first author proved in \cite{N2} that $P$ is effectively hyperbolic if and only if for every double characteristic point $z_0=(t_0,x_0,0,\xi_0)$ where $q(t_0,x_0,\xi_0)=0$
 there exists a smooth   function $f(t, x,\xi) $ (independent of $\tau$) which is  homogeneous of degree 0 in $\xi$ and defined in  a small conic neighborhood $W$ of $z'_0 = (t_0, x_0, \xi_0)$ such that  
$$
q(t, x, \xi) \geq \delta f^2(t, x, \xi) |\xi|^2, \;\; \delta > 0,\;\;(t, x, \xi) \in W
$$
and 
\begin{equation}
\label{eq:timef}
C_{z_0} \cap T_{z_0}\{(t, x, \tau, \xi): f(t,x,\xi) = 0\} =\{0\},\:\text{(equivalently  $-H_f(z_0) \in C_{z_0}$)},
\end{equation}
$C_{z_0}$ being the {\it propagation cone} at $z_0$. Such $f(t,x,\tau, \xi)$ verifying \eqref{eq:timef} is called a time function at $z_0$. Notice that if $z_j$ are simple characteristic points and if 
$$z_j \to z_0,\;\;
\lambda_j H_{p_2}(z_j) \to X,\;\;\lambda_j >0,
$$
 then $X \in C_{z_0}$. So $C_{z_0}$ contains the directions of all (simple) bicharacteristics converging to $z_0$.
We have a loss of regularity passing across the manifold given by $f= 0$ and for this reason it is convenient to obtain
microlocal energy estimates with weight $f^{-N}$ and a big $N \gg 1$ depending on lower order terms.

The situation on the case  with triple characteristics points  is quite different. We show in Example 2.2 in the next section that
there are effectively hyperbolic operators of third order with triple characteristics points lying on $t = 0$ for which even a weaker condition 
$$
\Delta \geq \delta t^m (t + \alpha(x, \xi))^k ,\;\; \delta > 0
$$
with arbitrary large $m, k \in \N$ is not microlocally satisfied. Thus in general the construction of a time function seems impossible and it is an open problem if the operator in Examples 2.2 is strongly hyperbolic.\\

Our approach is based on two new ideas. First one reduces the equation $P u = f$ to a first order pseudo-differential system
$D_t U(t) =A\la D \ra U(t) +B U(t) + F$ with 
\[
 A=\left(\begin{array}{ccc}
0&a&b\\
1&0&0\\
0&1&0\end{array}\right)
\]
which is not diagonalizable for $z \in \Sigma_3$. Hence it is not possible to construct a  positive definite symmetrizer for $A$.
On the other hand, it is easy to see that a  non-negative definite symmetrizer $S(t)$ exists. To obtain energy estimates with (big) loss of derivatives, we need to work with an energy ${\mathsf{Re}}\,(S(t)U, U)_{L^2(\R^n)}$ which in our case is not positive. An application of the sharp
G\aa rding inequality for ${\mathsf{Re}}\,(S(t)U, U)_{L^2(\R^n)}$ is not sufficient, while the Fefferman-Phong inequality for matrix symbols
$$
{\mathsf{Re}}\,(S(t)U, U)_{L^2(\R^n)} \geq -C \|\la D \ra^{-1} U\|^2,\;\; 0 \leq t \leq T,\;\;C >0
$$
in general is false (see \cite{Bru}, \cite{Pa}). Our second idea is to prove under the condition (E) the following inequality
\[
{\mathsf{Re}}(S(t)U,U)_{L^2(\R^n)}\geq \delta t\big(\sum_{j=1}^2\|U_j\|^2+(aU_3,U_3)\big)-Ct^{-1}\|\lr{D}^{-1}U\|^2, \; 0 < t \leq T,\: \delta > 0,\: C > 0
\]
which is a version of Fefferman-Phong inequality with a constant $C t^{-1}$.
By using the latter inequality, we define the energy $(\tilde{S}(t)U, U)$ with
$\tilde{S}(t) = S(t) + \lambda t^{-1} \la \xi \ra^{-2}I$ choosing $\lambda \geq \lambda_0$ large enough. The main point is to obtain local energy estimates with weight $t^{-N}$ (see Theorem \ref{pro:energy}). First, in Section 3 we obtain these estimates assuming (E) being globally satisfied. Next in Section 4  we show how we can deduce  global estimates from  microlocal ones. Consequently, under the assumptions of Theorem \ref{pro:energy} the operator $P$ is strongly hyperbolic. Finally, in Appendix
we give a representation of a Friedrichs symmetrization by  Weyl quantized pseudo-differential operator.
 
\section{Hyperbolic symbols with triple characteristics}

In this section we use the notation of Section 1 and for brevity we will write below $\Delta,\: \alpha, \tilde{a}$ instead of $\Delta(t, x, \xi),\:\alpha(x, \xi), \:\tilde{a}(t, x,\xi)$.
Assume that $\alpha(x_0, \xi_0) = 0.$ For our analysis of the Cauchy problem we introduce two conditions:\\

(H) There exist a small conic neighborhood $W$ of $(x_0, \xi_0)$ and a small $\delta > 0$ such that for $(x, \xi) \in W$ and small $t \geq 0$ we have
\begin{equation} \label{eq:2.9}
\Delta(t, x, \xi) \geq \delta t^2(t + \alpha(x, \xi)).
\end{equation}

(E) There exist a small conic neighborhood $W$ of $(x_0, \xi_0)$ and a small $\delta > 0$ such that for $(x, \xi) \in W$ and small $t \geq 0$ we have
\begin{equation} \label{eq:2.10}
\Delta(t, x, \xi) \geq \delta t(t + \alpha(x, \xi))^2.
\end{equation}
Clearly, (E) implies (H), but the inverse is not true. It is easy to see that (H) is satisfied if there exists $\epsilon > 0$ such that
$$
\Delta(t, x,\xi) \geq \epsilon (t^3 + \alpha^3),\;\; (x,\xi) \in W.
$$
\begin{lem} Assume that for $(x, \xi)\in W$ we have
\begin{equation} \label{eq:2.11}
|\beta_1(x,\xi)|\leq \frac{1}{\sqrt{3}}\sqrt{\alpha(x,\xi)}.
\end{equation}
Then $(H)$ holds for $(x, \xi)$ in a possibly smaller neighborhood $W_1 \subset W$. Next assume that there exists
$\epsilon > 0$ such that for $(x, \xi) \in W$ we have
\begin{equation} \label{eq:2.12}
|\beta_1(x,\xi)|\leq \frac{1 - \epsilon}{\sqrt{3}}\sqrt{\alpha(x,\xi)}. 
\end{equation}
 Then $(E)$ holds for $(x, \xi) \in W_1$.
\end{lem}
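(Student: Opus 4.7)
My strategy is to Taylor-expand $\Delta(t,x,\xi)=4a^3-27b^2$ in $t$ at $t=0$ and to estimate each coefficient using only the hyperbolicity of $P$ at $t=0$ and the hypothesis on $\beta_1$.

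First, hyperbolicity gives $\Delta(0,x,\xi)=4\alpha^3\tilde a(0,x,\xi)^3-27\beta_0^2\geq 0$, hence the fundamental bound $|\beta_0(x,\xi)|\leq \frac{2}{3\sqrt 3}\alpha^{3/2}\tilde a(0,x,\xi)^{3/2}$. Writing $\tilde a_0=\tilde a(0,x,\xi)$ and differentiating $\Delta=4(t+\alpha)^3\tilde a^3-27b^2$ at $t=0$, the three relevant Taylor coefficients are
\[
\partial_t\Delta|_{t=0}=12\alpha^2\tilde a_0^3-54\beta_0\beta_1+O(\alpha^3),\quad
\frac{1}{2}\partial_t^2\Delta|_{t=0}=12\alpha\tilde a_0^3-27\beta_1^2+O(\alpha^{3/2}),
\]
\[
\frac{1}{6}\partial_t^3\Delta|_{t=0}=4\tilde a_0^3+O(\alpha^{1/2}),
\]
together with $\Delta(0,x,\xi)\geq 0$. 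The error terms collect the $t$-dependence of $\tilde a$ and the contributions from $\beta_2$, and are uniformly controlled on any small neighborhood of $(x_0,\xi_0)$.

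For (H), I would use \eqref{eq:2.11} to bound $27\beta_1^2\leq 9\alpha$ and, by Cauchy--Schwarz on the bounds for $\beta_0$ and $\beta_1$, $54|\beta_0\beta_1|\leq 12\alpha^2\tilde a_0^{3/2}$. Substituting these estimates and shrinking $W_1$ so that $\alpha$ is sufficiently small, the $t^2$- and $t^3$-coefficients become strictly positive and dominant while the possibly negative $t$-coefficient is absorbed, giving $\Delta(t,x,\xi)\geq \delta(\alpha t^2+t^3)$, which is of the required form $\delta t^2(t+\alpha)$. For (E), the sharper assumption \eqref{eq:2.12} tightens the cross-product bound to $54|\beta_0\beta_1|\leq 12(1-\epsilon)\alpha^2\tilde a_0^{3/2}$, producing a strictly positive $t$-coefficient of order $\epsilon\alpha^2$; together with the positive $t^2$- and $t^3$-terms this yields $\Delta(t,x,\xi)\geq \delta(\epsilon\alpha^2 t+\alpha t^2+t^3)\geq\delta' t(t+\alpha)^2$, i.e.\ (E).

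I expect the main obstacle to be the uniform control of the remainder terms on $W_1$ and the reconciliation of the leading $\tilde a_0^3$ factors with the $\tilde a_0^{3/2}$ factors entering the cross-product bound. Both issues reduce to smallness of $\alpha$, which is achieved by shrinking $W_1$ around $(x_0,\xi_0)$: since $\alpha(x_0,\xi_0)=0$, the $O(\alpha^{3/2})$ and $O(\alpha^{1/2})$ remainders become negligible compared with the explicit positive contributions once $W_1$ is chosen small enough.
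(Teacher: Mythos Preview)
Your approach is essentially the paper's: expand $\Delta$ in powers of $t$, bound each coefficient using $|\beta_0|\le \frac{2}{3\sqrt3}\alpha^{3/2}\tilde a_0^{3/2}$ and the hypothesis on $\beta_1$, and read off $(H)$ or $(E)$. One sharpening, however, is needed in your $(H)$ argument. You describe the linear coefficient $12\alpha^2\tilde a_0^3-54\beta_0\beta_1+O(\alpha^3)$ as ``possibly negative'' and to be ``absorbed,'' but there is nothing available to absorb a term of order $-\alpha^2 t$: the constant term $\Delta(0)$ may vanish, and $\alpha t^2$ only dominates $\alpha^2 t$ when $t\gtrsim\alpha$. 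The paper instead observes (taking $\tilde a=1$ as it implicitly does) that the very bound you wrote, $54|\beta_0\beta_1|\le 12\alpha^2$, makes the linear coefficient \emph{nonnegative}; together with $\Delta(0)\ge 0$ one simply drops both and keeps the $t^2$- and $t^3$-terms, giving $\Delta\ge 2t^2(t+\alpha)$. Your $(E)$ argument is exactly the paper's: under \eqref{eq:2.12} the linear coefficient becomes $\ge c\,\epsilon\alpha^2$, and combining with the positive $t^2,t^3$ terms yields $\Delta\ge \delta'(t^3+\epsilon\alpha^2 t)\ge \delta'' t(t+\alpha)^2$.

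On the $\tilde a_0^{3}$ versus $\tilde a_0^{3/2}$ mismatch you flag at the end: this does \emph{not} reduce to smallness of $\alpha$, since both competing terms scale like $\alpha^2$. The paper's proof tacitly takes $\tilde a\equiv 1$; if you want to keep track of a general $\tilde a\ge c_0$, either normalize so that $\tilde a_0\ge 1$ on $W_1$, or note that the hypothesis \eqref{eq:2.11} is naturally the dimensionless bound $|\beta_1|\le\frac{1}{\sqrt3}\sqrt{\alpha}\,\tilde a_0^{3/2}$, under which the cross term matches the main term exactly.
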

 \begin{proof} 
 Assume that \eqref{eq:2.11} is fulfilled. Since $|\beta_0| \leq \frac{2}{3 \sqrt{3}} \alpha^{3/2}$, we get
$$2 \alpha^2 - 9 \beta_1 \beta_0 \geq 2 \alpha^{3/2}[ \alpha^{1/2} - \sqrt{3} |\beta_1|] \geq 0.$$
 Then $12\alpha^2 - 54 \beta_1 \beta_0 \geq 0$ and we have
$$\Delta \geq \Bigl(4 - 54 \beta_2 \beta_1 - 27 t \beta_2^2\Bigr)t^3  + (12\alpha - 27 (\beta_1^2 + 2 \beta_2 \beta_0))t^2.$$
Next, for small $\alpha$ and $t > 0$ for $(x, \xi) \in W_1$ we arrange
$$12\alpha - 27 (\beta_1^2 + 2 \beta_2 \beta_0) \geq 3(4 \alpha - 3 \alpha - 4 \sqrt{3}|\beta_2| \alpha^{3/2}) \geq 2 \alpha,$$
$$4 - 54 \beta_2 \beta_1 - 27 t \beta_2^2 \geq 2,$$
and this implies
$$\Delta \geq 2t^2(t + \alpha).$$
Now suppose that \eqref{eq:2.12} is satisfied. Then $9|\beta_1 \beta_0| \leq 2(1 -\epsilon) \alpha^2$ and
$12\alpha^2 - 54 \beta_1 \beta_0 \geq \epsilon \alpha^2$. This implies for small $t$ and $\alpha$ the estimate
$$ \Delta \geq 2 t^3 + \epsilon t\alpha^2 \geq \epsilon_1 t (t + \alpha)^2,\;\; \epsilon_1 > 0$$
and the proof is complete.
\end{proof}

We present below some examples when the condition (H) is not satisfied.\\

{\bf Example 2.1.} Consider the symbols
\begin{equation} \label{eq:1}
p_{\pm}(t, x, \tau, \xi) = \tau^3 -(t + x^2)\tau\xi^2 \pm  t x\xi^3.
\end{equation}

Clearly,  $p_{\pm}$ are hyperbolic for $ t \geq 0$, since
$$\Delta_{\pm} = 4 (t + x^2)^3 - 27t^2 x^2  = (t - 2 x^2)^2 (4t + x^2)\geq 0,\;\; t \geq 0. $$
For $ t = 2 x^2$ and $x \neq 0$ we have a double characteristic roots, while for $t = x = 0$ we have a triple root.
For $t = x = 0$ the symbols $p_{\pm}$ are effectively hyperbolic and for small $t \geq 0$ and small $x$ the fundamental matrix $F_{p_{\pm}}$ has
non-zero real eigenvalues by perturbation. On the other hand, both symbols admit a factorization with a smooth root $\tau = \pm x \xi$, that is
$$
p_{+} = (\tau^2 + x\tau \xi - t \xi^2)(\tau - x \xi),
$$
$$
p_{-} = (\tau^2 - x \tau \xi - t \xi^2)(\tau + x \xi).
$$

{\bf Examples 2.2.}
Consider the symbol
\begin{equation} \label{eq:1}
p(t, x, \tau, \xi) = \tau^3 -(t + \alpha(x))\xi^2\tau  + (t^{m}/2 - t )\sqrt{\alpha(x)}\xi^3,\;\; m \gg 2,
\end{equation}
where $\alpha(x) \geq 0$ and $\sqrt{\alpha(x)}$ is smooth.
 The symbol $p$ is hyperbolic for $0 \leq  t \leq 1$ since
$$\Delta = 4 (t + \alpha)^3 - 27 (t^m/2 - t)^2 \alpha$$
$$= 4t^3 - 15 t^2 \alpha +12t \alpha^2 + 4 \alpha^3 + 27 t^{m+1}\alpha(1 - t^{m - 1}/4)$$
$$= ( t - 2\alpha)^2 (4 t + \alpha) + 27 t^{m+1} \alpha( 1-  t^{m-1}/4) \geq 0$$
and $\Delta >0$ for $t +\alpha > 0.$

For $ t = 2 \alpha$ we have $\Delta = 27\cdot 2^{m+1} \alpha^{m+2}(1 -  2^{m-3}\alpha^{m-1})$ and clearly we cannot have an estimate
$$\Delta \geq \delta t^k( t + \alpha)^q,\;\; \delta > 0 $$
for small $ \alpha> 0$ if $m > k + q - 2.$

Finally, we will discuss some link between the conditions (H) and (E) and the behavior of the real roots $\lambda_k(t, x, \xi),\: k =1, 2,3$ of the equation $p = 0$ with respect to $\tau.$
These roots have the trigonometric form
$$\begin{cases} \lambda_1 = 2\rho\cos(\theta/3),\\
\lambda_2 = 2\rho \cos(\theta/3 + \frac{2\pi}{3}),\\
\lambda_3 =  2\rho\cos(\theta/3 + \frac{4 \pi}{3}),\end{cases}$$
where 
$$\rho = \Bigl(\frac{a}{3}\Bigr)^{1/2}\la \xi \ra,\;\; \theta = \arccos \Bigl(\frac {3 \sqrt{3} b}{2a^{3/2}}\Bigr).$$
We assume below that $|\xi|= 1.$ If the condition (H) holds, for $t \geq 0$ we have
$$\sin^2\theta =1 - \cos^2 \theta = \frac{ \Delta}{4(t+ \alpha)^{3}\tilde{a}^{3}} \geq c_0^2 \frac{t^2}{(t + \alpha)^2}$$
hence $|\sin\theta| \geq c_0 t (t + \alpha)^{-1}, \: c_0 >0$ if $t + \alpha > 0.$

We apply the elementary inequality
$$|\sin \theta| = |\sin (\theta /3)| ( 3 \cos^2 (\theta/3)- \sin^2(\theta/3))| \leq 4|\sin(\theta/3)|$$
and consider two cases:  (i) $-\pi/2\leq \theta \leq \pi/2$, (ii) $\pi/2 < \theta < 3\pi/2.$ 
First we deal with the case (i). 
We obtain 
$$
|\sin(\theta/3)| \geq |\sin \theta|/4 \geq \frac{t}{4 c_0(t + \alpha)}.
$$
Next $$|\lambda_2 - \lambda_3| = 2 \rho \Bigl|\cos\Bigl( \theta/3 + \frac{2\pi}{3}\Bigr) -\cos \Bigl(\theta/3 + \frac{4\pi}{3}\Bigr)\Bigr|
$$
$$ =  4 \rho |\sin (\theta/3)| \sin (\frac{2 \pi}{3}) \geq C_0 \frac{t}{\sqrt{t + \alpha}}.
$$
For the differences $\lambda_1 - \lambda_2,\: \lambda_1 - \lambda_3$ it is easy to obtain bounds, since we have obvious
estimates for  $\sin(\theta/3 + \pi/3)$ and $\sin(\theta/3 + 2 \pi/3)$ because
$$
\pi/6 \leq \theta/3 +\pi/3\leq \pi/2,\;\;   \pi/2 \leq\theta/3 + 2 \pi/3\leq \frac{5}{6}\pi.
$$
Thus we have
\[
C_1\sqrt{t}\leq C_1 \sqrt{(t + \alpha)}\leq |\lambda_1 - \lambda_k| \leq C_2 \sqrt{(t + \alpha)},\;\; k = 2, 3.
\]
Consequently,
\begin{equation}
|(\lambda_1 - \lambda_k) (\lambda_2 - \lambda_3)| \geq C_0C_1  t, \;\; k = 2, 3,
\end{equation}
while 
\begin{equation}
 |(\lambda_1 - \lambda_2)(\lambda_1 - \lambda_3)| \geq C_1^2 (t + \alpha) \geq C_1^2 t.
\end{equation}
The analysis of the case (ii) is very similar. In this case we have
$$
\frac{\pi}{6} < \theta/3 < \frac{\pi}{2}, \;\; \frac{\pi}{2} < \theta/3  + \frac{\pi}{3}< \frac{5 \pi}{6},
$$
while
$$ \frac{5 \pi}{6} < \theta/3 + \frac{2 \pi}{3} < \frac{7\pi}{6}.$$
To estimate $|\sin(\theta/3 + 2 \pi/3)|$, notice that $3 (\theta/3 + 2 \pi/3) = \theta + 2 \pi,$
hence 
$$
|\sin\Bigl(\theta/3 + \frac{2\pi}{3}\Bigr)| \geq \frac{|\sin \theta|}{4}
$$
 and we repeat the above argument.\\

In the case when (E) holds one has a sharper result, since for $t + \alpha > 0$ we have
 $$|\sin\theta| \geq c_0\sqrt{\frac{t}{t + \alpha}}, \;\; c_0 >0$$
and 
$$|\lambda_2 - \lambda_3| \geq C_0 \sqrt{t},
$$
$$|(\lambda_1 -\lambda_k)(\lambda_2 -\lambda_3) | \geq C_2 \sqrt{t(t + \alpha)},\: k = 2, 3.$$
Thus we have proved the following
\begin{prop} Let
$$\delta_k(t, x, \xi) = \frac{\pa p(t, x, \tau, \xi)}{\pa \tau}\Bigl\vert_{\tau = \lambda_k(t, x, \xi)},\;\; k = 1,2,3$$
and assume the condition $(H)$. Then there exist a constant $c_1 > 0$ and a conic neighborhood $W_1$ of $(x_0, \xi_0)$ such that for sufficiently small $t \geq 0$ and $(x,\xi) \in W_1$ we have
\begin{equation}
|\delta_k(t, x,\xi) | \geq c_1 t \la \xi \ra^2, \;\; k = 1, 2, 3.
\end{equation}
In the case when the condition $(E)$ is satisfied we have a sharper estimate
\begin{equation}
|\delta_k(t, x, \xi)| \geq c_1 \sqrt{t(t + \alpha)} \la \xi \ra ^2,\:\: k = 1,2,3.
\end{equation}
\end{prop}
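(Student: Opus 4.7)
The plan is to use the product formula for $\delta_k$ in terms of root differences: since $p$ is monic in $\tau$, one has $p(\tau) = \prod_{k=1}^{3}(\tau - \lambda_k)$, and hence
\[
\delta_k(t,x,\xi) \;=\; \prod_{j\neq k}\bigl(\lambda_k(t,x,\xi) - \lambda_j(t,x,\xi)\bigr), \qquad k=1,2,3.
\]
This reduces the estimates on $\delta_k$ to estimates on the pairwise separations $|\lambda_j - \lambda_k|$, which are precisely what the trigonometric computation above the statement delivers. Moreover, $\delta_k$ is homogeneous of degree $2$ in $\xi$, so it suffices to work at $|\xi|=1$ and then restore the factor $\langle\xi\rangle^{2}$ at the end (for $|\xi|$ bounded away from $0$; small $\xi$ are irrelevant at the level of the principal symbol).

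First I would treat condition (H). From the preceding discussion, for $(x,\xi)\in W_1$, $|\xi|=1$, and small $t\geq 0$,
\[
|\lambda_1 - \lambda_k| \;\geq\; C_1\sqrt{t+\alpha}\quad(k=2,3), \qquad |\lambda_2 - \lambda_3| \;\geq\; C_0\,\frac{t}{\sqrt{t+\alpha}}.
\]
Multiplying two of these at a time yields $|\delta_1| \geq C_1^{2}(t+\alpha) \geq C_1^{2}\,t$ and $|\delta_k| \geq C_0 C_1\,t$ for $k=2,3$, which is the first claimed inequality after restoring $\langle\xi\rangle^{2}$. Under condition (E) I would use instead the sharper bound $|\lambda_2-\lambda_3|\geq C_0\sqrt{t}$ combined with $|\lambda_1-\lambda_k|\geq C_1\sqrt{t+\alpha}$, from which the product formula immediately gives $|\delta_k(t,x,\xi)|\geq c_1\sqrt{t(t+\alpha)}\,\langle\xi\rangle^{2}$ for each $k=1,2,3$ (for $\delta_1$ one uses $(t+\alpha)\geq \sqrt{t(t+\alpha)}$).

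The genuine difficulty has already been absorbed into the trigonometric analysis preceding the statement: translating the lower bound on $\sin\theta$ (coming from $\Delta$) into a lower bound on the \emph{correct} one of $\sin(\theta/3 + 2k\pi/3)$ — which differs between the two cases $\theta\in[-\pi/2,\pi/2]$ and $\theta\in(\pi/2,3\pi/2)$ — and bounding the remaining two sines from below by exploiting that the corresponding arguments sit in a compact subinterval bounded away from $0$ and $\pi$. Once those separation estimates are in hand, the step from $|\lambda_j-\lambda_k|$ to $\delta_k$ is a purely algebraic multiplication and introduces no further obstacle.
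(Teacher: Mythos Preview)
Your proposal is correct and follows exactly the approach of the paper: the proposition is simply a summary of the trigonometric separation estimates derived immediately before it, combined with the product formula $\delta_k=\prod_{j\neq k}(\lambda_k-\lambda_j)$ and homogeneity. The paper makes precisely the same case split on $\theta$, obtains the same bounds $|\lambda_1-\lambda_k|\geq C_1\sqrt{t+\alpha}$ and $|\lambda_2-\lambda_3|\geq C_0\,t/\sqrt{t+\alpha}$ (resp.\ $C_0\sqrt{t}$ under (E)), and multiplies them pairwise just as you do.
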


\section{Symmetrizer and energy estimates}

In this paper we work with symbols $a(t,x,\xi)$ which depend on $t$ smoothly  and use the Weyl quantization of $a$
\[
a(t,x,D)u=({\rm Op}^w(a)u)(x)=(2\pi)^{-n}\int\int e^{i(x-y)\xi }a\Bigl(t, \frac{x+y}{2},\xi\Bigr)u(y)dyd\xi.
\]
We will often write $au$ instead of $a(t,x,D)u$ or ${\rm Op}^w(a)u$ if there is no confusion. We abbreviate  $S^m_{1,0}$ to $S^m$.
We study the operator  $P$
\begin{equation}
\label{eq:takata}
P=D_t^3-a(t,x,D)D_t\lr{D}^2-b(t,x,D)\lr{D}^3+\sum_{j=0}^2b_{1j}(t,x,D)D_t^{2-j}\lr{D}^j
\end{equation}
which is differential operator in $t$ where $b_{1j}\in S^0$. Here we assume  
\begin{equation}
\label{eq:tataki}
a(t,x,\xi)=(t+\al(x,\xi)){\tilde a}(t,x,\xi),
\end{equation}
where $0<c_0\leq {\tilde a}(t,x,\xi)\in S^0$ and $b(t,x,\xi)\in S^0$ depend smoothly on $t\in [0,T]$ and $0\leq \al(x,\xi)\in S^0$, 
\begin{equation}
\label{eq:1.5bis}
\Delta(t, x,\xi) = 4 a^3(t, x, \xi)- 27 b^2(t, x, \xi) \geq 0, \quad (t, x,\xi) \in [0, T]\times \R^n\times\R^n. 
\end{equation}
%
We will study the Cauchy problem for $P$ for $t \geq 0$. 
  According to Lemma 8.1 in \cite{IP}, we have
\begin{equation} \label{eq:2.3}
\grad_x\alpha(x_0,\xi_0) = 0,\;\;\grad_{t, x, \xi}b(0, x_0, \xi_0) = 0,
\end{equation}
\begin{equation} \label{eq:2.4}
\pa^2_{x,\xi} b(0,x_0,\xi_0) = \pa^2_{x, x}b(0,x_0, \xi_0) = 0
\end{equation}
if $\alpha(x_0,\xi_0)=0$. The hyperbolicity \eqref{eq:1.5bis} implies stronger conditions: 
\begin{lem} 
\label{lem:sou} For small $t \geq 0$ one has
\begin{equation} \label{eq:2.5}
\pa_t b(t, x, \xi) = \co(\sqrt{a}),\;\; \pa^{\beta}_{x}\pa^{\gamma}_{\xi} b(t, x, \xi) = \co(a),\;\;|\beta + \gamma| = 1,
\end{equation}
\begin{equation}\label{eq:2.6}
\pa^{\beta}_x \pa^{\gamma}_{\xi} b(t, x, \xi)=\co (\sqrt{a}),\;\; |\beta + \gamma| = 2.
\end{equation}
\end{lem}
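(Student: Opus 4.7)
The plan is to exploit the hyperbolicity inequality $b^2 \leq \tfrac{4}{27}a^3$ together with the Taylor expansion $b(t,x,\xi) = \beta_0(x,\xi) + t\beta_1(x,\xi) + t^2\beta_2(t,x,\xi)$ introduced earlier, and to bring in Glaeser's inequality for the non-negative symbol $\alpha$ (and for $\Phi := \tfrac{4}{27}a^3 - b^2$). The estimates \eqref{eq:2.5}--\eqref{eq:2.6} amount to saying that each derivative of $b$ picks up a factor of $\sqrt{a}$ or $a$ beyond what the constraint $|b| \leq C a^{3/2}$ supplies.

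The first step is the $t$-derivative. Evaluating $b^2 \leq \tfrac{4}{27}a^3$ at $t=0$ gives $|\beta_0(x,\xi)| \leq C\alpha(x,\xi)^{3/2}$. Evaluating it at the specific time $t = \alpha(x,\xi)$, using $t+\alpha = 2\alpha$ and the trivial bound $|\beta_2| = O(1)$, one gets $|\beta_0 + \alpha \beta_1 + \alpha^2 \beta_2(\alpha)| \leq C\alpha^{3/2}$, and combining with the bound on $\beta_0$ yields $|\beta_1(x,\xi)| \leq C\sqrt{\alpha(x,\xi)}$. Substituting into $\partial_t b = \beta_1 + 2t\beta_2 + t^2\partial_t \beta_2$ and using $\sqrt{\alpha}+t \leq C\sqrt{t+\alpha} \leq C'\sqrt{a}$ for small $t+\alpha$ gives the desired $\partial_t b = O(\sqrt{a})$.

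For the spatial derivatives I would use Glaeser applied to $\alpha \geq 0$, which supplies $|\nabla_{x,\xi}\alpha|^2 \leq C\alpha$ and hence $\alpha(x_0+h,\xi_0+k) = O(\|(h,k)\|^2)$ near any zero $(x_0,\xi_0)$ of $\alpha$. At such a point the vanishing conditions \eqref{eq:2.3}--\eqref{eq:2.4} give $b(0,x_0,\xi_0)=0$, $\nabla b(0,x_0,\xi_0)=0$, and $\partial^2_{xx}b(0,x_0,\xi_0) = \partial^2_{x\xi}b(0,x_0,\xi_0)=0$, so by smoothness $\partial^2_{xx}b(t,x_0,\xi_0) = O(t)$. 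A third-order Taylor expansion of $b$ in $x$ at $(t,x_0,\xi_0)$ combined with $|b(t,x_0+h,\xi_0)| \leq C(t+O(h^2))^{3/2} \leq C(t^{3/2}+h^3)$ yields
\[
|h\,\partial_x b(t,x_0,\xi_0)| \leq C(t^{3/2} + th^2 + h^3),
\]
and optimising at $h \sim t^{1/2}$ gives $|\partial_x b| \leq Ct = Ca$; the analogous third-order expansion for the second derivative (which vanishes to one higher order at $\Sigma_3$) delivers the $O(\sqrt{a})$ bound of \eqref{eq:2.6}.

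The main obstacle I anticipate is uniformity in $(x,\xi)$ \emph{off} the triple characteristic, i.e.\ when $\alpha(x,\xi)>0$ is small but nonzero. A naive application of Glaeser to $\Phi = \tfrac{4}{27}a^3 - b^2 \geq 0$ in the $(x,\xi)$ variables produces only $|b\,\nabla_{x,\xi}b| \leq Ca^{3/2}$, which collapses when $b$ is small and does not by itself yield $|\nabla_{x,\xi}b| = O(a)$. To get the correct orders uniformly, one must propagate the vanishing of $\nabla\beta_0$ and $\text{Hess}\,\beta_0$ from $\Sigma_3$ to nearby points via the Taylor-in-$t$ structure of $b$ combined with $\alpha = O(\|\cdot\|^2)$ near its zeros; the balance between the $t^{3/2}$ arising from $b$ and the $h^2$ arising from $\alpha$, optimised as above, is the delicate part of the argument.
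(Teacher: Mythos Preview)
Your treatment of $\partial_t b$ is correct and coincides with the paper's: bound $\beta_0 = O(\alpha^{3/2})$ at $t=0$, evaluate the hyperbolicity inequality at $t = \alpha$ to force $\beta_1 = O(\sqrt\alpha)$, then sum with $t = O(\sqrt{t+\alpha})$.

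For the $(x,\xi)$-derivatives, however, there is a real gap, and it is precisely the one you flag in your last paragraph without closing. Your optimisation argument only establishes $\nabla_{x,\xi}\, b = O(a)$ and ${\rm Hess}_{x,\xi}\, b = O(\sqrt a)$ at points $(t,x_0,\xi_0)$ with $\alpha(x_0,\xi_0)=0$, because it rests on the vanishing data \eqref{eq:2.3}--\eqref{eq:2.4} available only there. Propagating this to a general point with $\alpha(x,\xi)>0$ by ``expanding from the nearest zero of $\alpha$'' is not straightforward: $\{\alpha=0\}$ is merely the zero set of a nonnegative symbol and need not be a submanifold, so there is no canonical base point for such an expansion, and the $t^{3/2}$ versus $h^2$ balance you describe does not go through as stated.

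The paper bypasses $\Sigma_3$ (and \eqref{eq:2.3}--\eqref{eq:2.4}) entirely for the spatial estimates. It Taylor-expands both sides of $|b(t,x+sy,\xi+s\eta)| \leq C\,a(t,x+sy,\xi+s\eta)^{3/2}$ to second order in $s$ \emph{at the given point} $(t,x,\xi)$, and then sets $s = \sqrt{a(t,x,\xi)}$; Glaeser for $a\geq 0$ gives $|\nabla a| \leq C\sqrt a$, so the right side stays $O(a^{3/2})$. After dividing by $a^{3/2}$ one is left with
\[
\Bigl| \tfrac{1}{a}\langle\nabla b, X\rangle + \tfrac{1}{\sqrt a}\langle H(b)X,X\rangle \Bigr| \leq C,\qquad |X|\leq 1,
\]
and replacing $X$ by $\pm X$ decouples the odd and even parts, yielding $|\nabla_{x,\xi}\, b| \leq Ca$ and $\|H(b)\| \leq C\sqrt a$ uniformly. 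This scaling trick is what your proposal is missing; it also automatically covers $\partial_{\xi\xi}^2 b$, which \eqref{eq:2.4} does not address.
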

\begin{proof} Set $X = (y, \eta),\: |X| \leq 1$ and consider \eqref{eq:1.5bis} which implies  for $|s| \leq s_0$ with some $s_0>0$
\begin{equation}
 \label{eq:2.7}
 \begin{split}
|b(t, x +s y, \xi + s \eta)| =|b(t,x, \xi) + s\la\nabla b, X \ra +s^2 \la H(b)X, X\ra + \co(s^3)|\\
\leq C\Bigl(a(t, x, \xi) + s \la \nabla a, X \ra + s^2 \la H(a) X, X \ra + \co(s^3)\Bigr)^{3/2},
\end{split}
\end{equation}
where $H(b), \: H(a)$ denote the Hessian of $b$ and $a$ with respect to $(x,\xi)$ and $\nabla b, \:\nabla a$ is the gradient with respect to $(x, \xi)$. If $a(t, x, \xi) = 0$, we deduce
that $b(t, x, \xi) =0$, while $a(t, x, \xi) \geq 0$ implies $\nabla a(t, x, \xi) = 0$. Therefore
$$
|s \la \nabla b, X \ra + s^2 \la H(b)X, X \ra| \leq C |s|^3
$$
and we get $\nabla b(t, x, \xi) = 0$ and $H(b)(t, x, \xi) = 0.$ Now assume that $a(t, x, \xi) \neq 0.$ If $a(t, x,\xi) \geq s_0^2,$ then
$$
|\nabla b| \leq C = C s_0^{-2} s_0^2 \leq C_1s_0^{-2} a,\;\; \|H(b)\| \leq C = C s_0^{-1} s_0 \leq C_2 s_0^{-1} \sqrt{a}.
$$
Now suppose that $0 < a(t, x, \xi) < s_0.$ Then we take $s = \sqrt{a(t, x,\xi)}$ in (\ref{eq:2.7}) and obtain
\begin{align*}
 &|b +\sqrt{a} \la \nabla b, X \ra + a \la H(b)X, X \ra + \co(a^{3/2})|\\
 &\leq C \Bigl( a + \sqrt{a}\la \nabla a, X \ra + a\la H(a) X, X \ra + \co (a^{3/2})\Bigr)^{3/2}\leq C a^{3/2}
 \end{align*}
since $|\nabla a|\leq C \sqrt{a}$ by Gleaser inequality. Consequently,
$$
\Bigl|\frac{b}{a^{3/2}}+\frac{1}{a}\la \nabla b, X \ra + \frac{1}{\sqrt{a}} \la H(b) X, X \ra + \co(1)\Bigr| \leq C.
$$
Since $|b/a^{3/2}| \leq C$, we replace $X$ by $\mu X$ with $|X| = 1$ and $0 < |\mu| \leq 1$ and deduce
$$
|\mu| \Bigl| \frac{1}{a} \la \nabla b, X \ra + \mu \frac{1}{\sqrt{a}} \la H(b) X, X \ra \Bigr| \leq C.
$$
Choosing $\mu = \pm 1$, one gets
$$
- C \leq \frac{1}{a}\la \nabla b, X \ra \pm \frac{1}{\sqrt{a}}\la H(b) X, X \ra \leq C
$$
and this yields
$$
\Bigl| \la \nabla b, X \ra \Bigr| \leq C a,\;\; \Bigl|\la H(b) X, X \ra \Bigr| \leq C \sqrt{a}.
$$
Since $X$ with $|X| = 1$ is arbitrary, we obtain the desired estimates. 

For the derivative $\pa_t b$ we apply (\ref{eq:2.3}) if $t + \alpha =0$. To examine  $\pa_t b$ for $t + \alpha \neq 0$, set
$$
\beta_0(x,\xi) = b(0,x,\xi),\;\;\beta_1(x,\xi) = (\pa_t b)(0,x,\xi)
$$
and write
$$
b(t, x, \xi) = \beta_0(x, \xi) + t \beta_1(x, \xi)+ t^2 \beta_2(t, x, \xi).
$$
It is sufficient to prove that
\begin{equation} \label{eq:2.8}
\beta_1(x,\xi) = \co(\sqrt{\alpha}).
\end{equation}
From $\Delta \geq 0$ we deduce
$$
(t^2 \beta_1 + t \beta_1 + \beta_0)^2 \leq C (t + \alpha)^3.
$$
If $\alpha(x, \xi) = 0$, we obtain $\beta_0(x, \xi) = 0$ and hence $\beta_1(x, \xi) = 0$ because $t > 0.$ If $\alpha(x, \xi) \neq 0$, we take $t = \alpha(x, \xi)$ and since $\beta_0 = \co(\alpha^{3/2})$, one concludes that
$$
\alpha |\beta_1| = \co(\alpha^{3/2})
$$
which yields (\ref{eq:2.8}). This completes the proof.
\end{proof}

 In the following up to the end of this section we assume that \eqref{eq:2.10} is satisfied globally, that is
 \begin{equation}
 \label{eq:jyokenE}
 \Delta(t, x,\xi)  \geq \delta t(t+\alpha(x,\xi))^2, \quad (t, x,\xi) \in [0, T]\times \R^n\times\R^n. 
 \end{equation}
With $U={^t}(D_t^2u,D_t\lr{D}u,\lr{D}^2u)$ the equation $P u=f$ is reduced to 
\begin{equation}
\label{eq:redE}
D_tU=(A\lr{D}+B)U+F,
\end{equation}
where $F={^t}(f,0,0)$ and
\[
 A=\left(\begin{array}{ccc}
0&a&b\\
1&0&0\\
0&1&0\end{array}\right),\quad B= \left(\begin{array}{ccc}
b_{10}&b_{11}&b_{12}\\
0&0&0\\
0&0&0\end{array}\right).
\]
 Denote
\[
S(t,x,\xi)=\left(\begin{array}{ccc}
3&0&-a\\
0&2a&3b\\
-a&3b&a^2\end{array}\right)
\]
which is a symmetrizer of $A$ such that   
\[
S(t,x,\xi)A(t,x,\xi)=\left(\begin{array}{ccc}
0&2a&3b\\
2a&3b&0\\
3b&0&-ab\end{array}\right)
\]
is symmetric. 
\begin{lem}
\label{lem:G2} There exists $\delta>0$ such that
\[
S(t,x,\xi)\gg 2\delta t\left(\begin{array}{ccc}
1&0&0\\
0&1&0\\
0&0&a\end{array}\right)=2\delta t J.
\]
\end{lem}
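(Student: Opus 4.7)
My plan is to verify $S(t,x,\xi) - 2\delta t J \geq 0$ pointwise by Sylvester's criterion, checking that all three leading principal minors are nonnegative for some fixed small $\delta > 0$. The first two minors are essentially trivial: the $(1,1)$ minor is $3 - 2\delta t > 0$ for $\delta T < 3/2$, and the $2\times 2$ minor equals $(3-2\delta t)(2a - 2\delta t)$. Since $a = (t+\alpha)\tilde a \geq c_0 t$ by \eqref{eq:tataki}, the factor $2a - 2\delta t$ is nonnegative as soon as $\delta < c_0$. The entire content of the lemma is therefore concentrated in the $3\times 3$ determinant.

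A direct expansion along the first row gives $\det S = 4a^3 - 27 b^2 = \Delta$, and the same expansion applied to $S - 2\delta t J$ yields, after routine algebra,
\[
\det(S - 2\delta t J) = \Delta + 18\delta t\, b^2 - 16\delta t\, a^2 - 4 \delta t\, a^3 + R(\delta, t, a),
\]
where $R$ collects the terms of order $\delta^2$ and higher in $\delta$; since $a$ is bounded on $[0,T]\times\R^n\times\R^n$, the term $R$ is nonnegative as soon as $\delta T$ is small, and $18 \delta t\, b^2$ is automatically nonnegative.

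To absorb the apparent loss $-16 \delta t\, a^2 - 4\delta t\, a^3$, I would invoke the global form \eqref{eq:jyokenE} of hypothesis (E). Since $a = (t+\alpha)\tilde a$ with $\tilde a$ bounded, one has $a^2 \leq C(t+\alpha)^2$, so (E) delivers the key comparison
\[
\Delta \geq \delta_E\, t(t+\alpha)^2 \geq (\delta_E / C)\, t\, a^2.
\]
Together with the boundedness of $a$, this bounds the negative contribution by $C'\delta\, t\, a^2$ for some fixed $C'$ depending only on $\sup \tilde a$ and $T$. Picking $\delta$ small enough so that $C C'\delta \leq \delta_E/2$ then yields $\det(S - 2\delta t J) \geq 0$, and Sylvester's criterion finishes the proof.

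The essential obstacle, and the sole point at which (E) is genuinely needed, is the matching of exponents in the comparison $\Delta \geq \text{const}\cdot t\, a^2$. Under only (H) one would merely have $\Delta \geq c\, t^2(t+\alpha)$, which in the regime $\alpha \gg t$ gives $\Delta \sim t^2(t+\alpha) \ll t\, a^2$ and cannot control the $\delta t\, a^2$ loss from the determinant expansion; condition (E) supplies exactly the extra power of $(t+\alpha)$ required to cancel the $a^2$ produced by the off-diagonal entries of $S$. The rest of the argument is a bookkeeping exercise in fixing $\delta$.
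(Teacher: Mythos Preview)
Your proof is correct and follows essentially the same route as the paper's: both reduce the question to the nonnegativity of $\det(S - 2\delta t J)$, which equals $\Delta$ plus a perturbation of order $\delta\, t(t+\alpha)^2$ that condition~(E) exactly absorbs. The paper's version is simply more compressed, recording the perturbation as $2\delta\,\co\big(t(t+\alpha)^2\big)$ in one stroke rather than expanding the determinant term by term as you do; your explicit identification of the $-16\delta t\,a^2 - 4\delta t\,a^3$ loss and the reason (E) (and not merely (H)) is needed is a useful elaboration of what the paper leaves implicit.
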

\begin{proof}
  Since 
\[
S-2\delta t J=\left(\begin{array}{ccc}
3-2\delta t&0&-a\\
0&2a-2\delta t&3b\\
-a&3b&a^2-2\delta ta\end{array}\right),
\]
it is enough to note
\[
{\rm det}\,(S-2\delta tJ)=\Delta+2\delta \co \big(t(t+\al)^2\big).
\]
\end{proof}
To  derive energy estimates we apply the sharp G\aa rding inequality for which proof we employ the symmetrization method of Friedrichs and we make detailed looks at the difference between the original operator and its Friedrichs symmetrization as in \cite{N3}.  The details of this representation are given in the Appendix for convenience sake.
Denote $Q=S-\delta tJ$. Then $Q\in S^0$ and $Q\gg 0$, that is $Q$ is non-negative definite by Lemma \ref{lem:G2}. Denote by  $Q_F$ the Friedrichs part of $Q$ (the Friedrichs symmetrization of $Q$) which we define
precisely in the Appendix. By the construction of $Q_F$ we get  $(Q_FU,U)\geq 0$ for any $U\in C^{\infty}(\R_t: C_0^{\infty}(\R^n))$. For our argument we need a more precise representation of the
difference 
\begin{equation}
\label{eq:sGar}
\begin{split}
Q_F-{\rm Op}^w(Q)={\rm Op}^w\Big(
\sum_{2\leq |\al+\be|\leq 3}\psi_{\al,\be}(\xi)Q^{(\al)}_{(\be)}\Big)+{\rm Op}^w(R),\quad R\in S^{-2}_{1/2,0},
\end{split}
\end{equation}
where $\psi_{\al,\be}\in S^{(|\al|-|\be|)/2}$  are real symbols and we have used the notation $Q^{(\al)}_{(\be)}=\dif_{\xi}^{\al}D_x^{\be}Q$. We give a proof of \eqref{eq:sGar} in Appendix.
\begin{lem}
\label{lem:kagi} There exists $C>0$ such that for $U \in C^{\infty}(\R_t:C_0^{\infty}(\R^n))$ we have
\[
{\mathsf{Re}}(SU,U)\geq \delta t\big(\sum_{j=1}^2\|U_j\|^2+(aU_3,U_3)\big)-Ct^{-1}\|\lr{D}^{-1}U\|^2.
\]
\end{lem}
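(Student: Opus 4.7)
Write $S=Q+\delta tJ$ with $Q:=S-\delta tJ$ as in the paper. Since $S\gg 2\delta tJ$ by Lemma \ref{lem:G2}, we have $Q\gg\delta tJ\ge 0$, so the Friedrichs part $Q_F$ constructed in the Appendix satisfies $(Q_FU,U)\ge 0$. The representation \eqref{eq:sGar} rewrites
\[
{\rm Re}({\rm Op}^w(Q)U,U)=(Q_FU,U)-{\rm Re}({\rm Op}^w(\sigma)U,U)-{\rm Re}({\rm Op}^w(R)U,U),
\]
with $\sigma=\sum_{2\le|\al+\be|\le 3}\psi_{\al,\be}(\xi)Q^{(\al)}_{(\be)}\in S^{-1}$ and $R\in S^{-2}_{1/2,0}$. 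The claim reduces to bounding the $\sigma$- and $R$-corrections by $Ct^{-1}\|\lr{D}^{-1}U\|^2$ up to terms absorbable into the $\delta tJ$ reserve. The $R$-contribution is immediate: $|({\rm Op}^w(R)U,U)|\le C\|\lr{D}^{-1}U\|^2\le CT\,t^{-1}\|\lr{D}^{-1}U\|^2$ for $t\in(0,T]$.

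\textbf{Controlling $\sigma$.} Only the $(1,3),(3,1),(2,2),(2,3),(3,2),(3,3)$ entries of $Q$ depend on $(x,\xi)$. Using Lemma \ref{lem:sou} ($|\pa b|\le Ca$, $|\pa^2 b|\le C\sqrt{a}$) and Glaeser's inequality applied to $a=(t+\al)\tilde a\ge 0$ (giving $|\pa a|\le C\sqrt{a}$, $|\pa^2(a^2)|\le Ca$, $|\pa^3(a^2)|\le C\sqrt{a}$), and pairing with $\psi_{\al,\be}\in S^{(|\al|-|\be|)/2}$, one obtains
\[
|\sigma_{22}|,|\sigma_{13}|,|\sigma_{31}|\le C\lr{\xi}^{-1},\quad |\sigma_{23}|,|\sigma_{32}|\le C\bigl(\sqrt{a}\,\lr{\xi}^{-1}+\lr{\xi}^{-3/2}\bigr),\quad |\sigma_{33}|\le C\bigl(a\,\lr{\xi}^{-1}+\sqrt{a}\,\lr{\xi}^{-3/2}\bigr).
\]
For each off-diagonal entry I apply weighted Cauchy-Schwarz
\[
|({\rm Op}^w(\sigma_{ij})U_j,U_i)|\le \varepsilon\|U_i\|^2+C\varepsilon^{-1}\|\lr{D}^{-1}U_j\|^2,
\]
pairing $(1,3),(3,1)$ so that the $\varepsilon\|\cdot\|^2$-side falls on $U_1$ and $(2,3),(3,2)$ on $U_2$; the $(2,2)$ self-interaction is handled by the interpolation $\|\lr{D}^{-1/2}U_2\|^2\le\varepsilon\|U_2\|^2+\varepsilon^{-1}\|\lr{D}^{-1}U_2\|^2$. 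Choosing $\varepsilon=\tfrac14\delta t$ makes every $\varepsilon\|U_j\|^2$ with $j\in\{1,2\}$ absorbable into the $\delta t(\|U_1\|^2+\|U_2\|^2)\subset\delta t(JU,U)$ coming from $Q\gg\delta tJ$.

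\textbf{The $(3,3)$ term and main obstacle.} The self-interaction of $U_3$ is the delicate point: a naive weighted Young splitting would produce $\|U_3\|^2$, which cannot be absorbed into $\delta t({\rm Op}^w(a)U_3,U_3)$ since the latter vanishes with $a$. The crucial gain is that $|\sigma_{33}|\le Ca\,\lr{\xi}^{-1}$ factorizes as $\sqrt{a}\cdot\sqrt{a}\,\lr{\xi}^{-1}$, so a further scalar sharp G{\aa}rding applied to the non-negative symbol $a$ yields
\[
|({\rm Op}^w(\sigma_{33})U_3,U_3)|\le C_0({\rm Op}^w(a)U_3,U_3)+C'\|\lr{D}^{-1}U_3\|^2,
\]
with the first summand absorbed into the $\delta t({\rm Op}^w(a)U_3,U_3)$ reserve. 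Establishing this factorization rigorously, together with verifying that no residual $\|U_3\|^2$ survives after combining every $(i,j)$-pairing, is the main technical obstacle of the proof.
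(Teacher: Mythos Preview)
Your overall strategy---split $S=Q+(\text{multiple of }tJ)$, apply the Friedrichs symmetrization to $Q$, and control the correction $\sigma$ entry by entry---coincides with the paper, and your symbolic bounds on the entries of $\sigma$ are correct. The genuine gap is in the $(3,3)$ entry. The inequality you propose,
\[
|({\rm Op}^w(\sigma_{33})U_3,U_3)|\le C_0(aU_3,U_3)+C'\|\lr{D}^{-1}U_3\|^2,
\]
is true, but the first term on the right carries a \emph{fixed} constant $C_0$, whereas the only reserve available for it is $\delta t\,(aU_3,U_3)$. For small $t$ this absorption is impossible, so the argument does not close. (For the off-diagonal entries you correctly introduce the weight $\varepsilon=\tfrac14\delta t$ in Cauchy--Schwarz; the same device has to be brought into the $(3,3)$ estimate.)

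The paper resolves this by exploiting the negative order of $\sigma_{33}$ \emph{before} any positivity argument. One writes $\sigma_{33}={\mathsf{Re}}\,(T\#\lr{\xi}^{-3/2})+R$ (for the $|\al+\be|=3$ contribution; analogously with $\lr{\xi}^{-1}$ for $|\al+\be|=2$), where $T\in S^0$ is real with $|T|\le C\sqrt{a}$ and $R\in S^{-2}$. Then
\[
{\mathsf{Re}}\,({\rm Op}^w(\sigma_{33})U_3,U_3)={\mathsf{Re}}\,(\lr{D}^{-3/2}U_3,TU_3)+\co(\|\lr{D}^{-1}U_3\|^2)\le \delta t\,\|TU_3\|^2+C(\delta t)^{-1}\|\lr{D}^{-1}U_3\|^2,
\]
and now the \emph{scalar} Fefferman--Phong inequality applied to $Ca-T^2\ge 0$ in $S^0$ gives $\|TU_3\|^2\le C(aU_3,U_3)+C\|\lr{D}^{-1}U_3\|^2$. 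The outcome is $C\delta t\,(aU_3,U_3)$, which carries the factor $t$ and is therefore absorbable (after adjusting $\delta$). Your heuristic factorization $\sigma_{33}\sim\sqrt{a}\cdot\sqrt{a}\,\lr{\xi}^{-1}$ is the right intuition, but $\sqrt a$ is not a smooth symbol; the rigorous replacement is exactly this $T$ with $T^2\le Ca$. Note also that sharp G{\aa}rding would only yield an error $\|\lr{D}^{-1/2}U_3\|^2$; it is the scalar Fefferman--Phong inequality that delivers the $\|\lr{D}^{-1}U_3\|^2$ required by the lemma.
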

\begin{rem} It is important to note that the sharp G\aa rding inequality for the matrix operators implies the
estimate
\[
{\mathsf{Re}}\,(S U,U)\geq -C\|\lr{D}^{-1/2}U\|^2.
\]
On the other hand, in general the Fefferman-Phong type inequality for matrix operators with non-negative symbols does not hold (see \cite{Bru}, \cite{Pa}).

\end{rem}
\begin{proof} Notice that $a$ is real, hence $(a U_3, U_3) = {\mathsf{Re}}\, (a U_3, U_3).$ Since  
$${\mathsf{Re}}\,(SU,U)={\mathsf{Re}}\,({\rm Op}^w(Q)U,U)+2\delta t\big(\sum_{j=1}^2\|U_j\|^2+(aU_3,U_3)\big),$$
 it is enough to prove
\begin{equation}
\label{eq:katuo}
\big|{\mathsf{Re}}({\rm Op}^w\Bigl(\sum_{2 \leq |\al + \be| \leq 3}\psi_{\al,\be}Q_{(\be)}^{(\al)}\Bigr)U,U)\big|\leq \delta t\big(\sum_{j=1}^2\|U_j\|^2+(aU_3,U_3)\big)+C\delta^{-1}t^{-1}\|\lr{D}^{-1}U\|^2.
\end{equation}
Indeed if this is true, then we have 
\begin{align*}
{\mathsf{Re}}({\rm Op}^w(Q)U,U)\geq (Q_FU,U)-\delta t\big(\sum_{j=1}^2\|U_j\|^2+(aU_3,U_3)\big)\\-C\delta^{-1}t^{-1}\|\lr{D}^{-1}U\|^2
-C\|\lr{D}^{-1}U\|^2\\
\geq -\delta t\big(\sum_{j=1}^2\|U_j\|^2+(aU_3,U_3)\big)-C\delta^{-1}t^{-1}\|\lr{D}^{-1}U\|^2,
\end{align*}
hence we conclude the assertion.
 
To prove \eqref{eq:katuo}, consider ${\mathsf{Re}}({\rm Op}^w(\psi_{\al,\be}Q_{(\be)}^{(\al)})U,U)$ with $|\al+\be|=3$. Note that
\[
Q^{(\al)}_{(\be)}=\left(\begin{array}{ccc}
0&0&-a^{(\al)}_{(\be)}\\
0&2a^{(\al)}_{(\be)}&3b^{(\al)}_{(\be)}\\
-a^{(\al)}_{(\be)}&3b^{(\al)}_{(\be)}&(a^2)^{(\al)}_{(\be)}-\delta ta^{(\al)}_{(\be)}\end{array}\right).
\]
Since $\psi_{\al,\be}a^{(\al)}_{(\be)}, \psi_{\al,\be}b^{(\al)}_{(\be)}\in S^{-3/2}$,   it is easy to see that
\begin{equation}
\label{eq:komazo}
\begin{split}
{\mathsf{Re}}({\rm Op}^w(\psi_{\al,\be}Q_{(\be)}^{(\al)})U,U)\leq \delta t\sum_{j=1}^2\|U_j\|^2+C\delta^{-1}t^{-1}\|\lr{D}^{-3/2}U\|^2\\
+{\mathsf{Re}}({\rm Op}^w(\psi_{\al,\be}((a^2)_{(\be)}^{(\al)}-\delta ta_{(\be)}^{(\al)}))U_3,U_3).
\end{split}
\end{equation}
To estimate the third term on the right-hand side, one can assume that $\psi_{\al,\be}((a^2)_{(\be)}^{(\al)}-\delta ta_{(\be)}^{(\al)})\in S^{-3/2}$ is real. Writing $\psi_{\al,\be}((a^2)_{(\be)}^{(\al)}-\delta ta_{(\be)}^{(\al)})={\mathsf{Re}}\,(T\#\lr{\xi}^{-3/2})+R$ with $T=\lr{\xi}^{3/2}\psi_{\al,\be}\big((a^2)_{(\be)}^{(\al)}-\delta ta_{(\be)}^{(\al)}\big)$ and $R\in S^{-2}$,  one gets
\begin{equation}
\label{eq:komazo:2}
{\mathsf{Re}}({\rm Op}^w(\psi_{\al,\be}((a^2)_{(\be)}^{(\al)})-\delta ta_{(\be)}^{(\al)})U_3,U_3)\leq \delta t\|TU_3\|^2+C\delta^{-1}t^{-1}\|\lr{D}^{-1}U_3\|^2.
\end{equation}
Thanks to Glaeser inequality we have $|(a^2)^{(\al)}_{(\be)}|\leq C'\sqrt{a}$ which yields $C\sqrt{a}-T\geq 0$  with some $C>0$ because $a\geq t$. Note that $\|TU_3\|^2=({\rm Op}^w(T\#T)U_3,U_3)$ and ${\mathsf{Re}}\, (T\#T) -T^2\in S^{-2}$. On the other hand, since $S^0\ni  Ca-T^2\geq 0 $, the Fefferman-Phong inequality for scalar symbols proves that
\begin{equation}
\label{eq:FeP}
C(aU_3,U_3)\geq \|TU_3\|^2-C\|\lr{D}^{-1}U_3\|^2
\end{equation}
from which we have
\begin{equation}
\label{eq:hana}
{\mathsf{Re}}({\rm Op}^w(\psi_{\al,\be}((a^2)_{(\be)}^{(\al)}-\delta ta_{(\be)}^{(\al)}))U_3,U_3)\leq \delta t(aU_3,U_3)+C\delta^{-1}t^{-1}\|\lr{D}^{-1}U_3\|^2.
\end{equation}
For the case $|\al+\be|=2$, observing that $\psi_{\al,\be}(a^2)^{(\al)}_{(\be)}\in S^{-1}$,  it suffices to repeat the same arguments.
\end{proof}
\begin{cor}
\label{cor:seiti}Let ${\tilde S}=S+\lambda\, t^{-1}\lr{\xi}^{-2}I$. Then there exists $\lambda_0>0$  such that for $\lambda \geq \lambda_0$ we have
\[
\begin{aligned}
{\mathsf{Re}}({\tilde S}U,U)={\mathsf{Re}}(SU,U)+\lambda t^{-1}\|\lr{D}^{-1}U\|^2\\
\geq \delta t\big(\sum_{j=1}^2\|U_j\|^2+(aU_3,U_3)\big)+(\lambda/2) t^{-1}\|\lr{D}^{-1}U\|^2.
\end{aligned}
\]

\end{cor}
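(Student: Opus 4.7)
The plan is to derive the corollary as an immediate consequence of Lemma \ref{lem:kagi}. First I would justify the identity ${\mathsf{Re}}({\tilde S}U,U) = {\mathsf{Re}}(SU,U) + \lambda t^{-1}\|\lr{D}^{-1}U\|^2$. Since $\lambda t^{-1}\lr{\xi}^{-2}$ is a real, $x$-independent scalar symbol, its Weyl quantization coincides with the Fourier multiplier $\lambda t^{-1}\lr{D}^{-2}$, and this operator is self-adjoint. Hence
\[
{\mathsf{Re}}\,({\rm Op}^w(\lambda t^{-1}\lr{\xi}^{-2}I)U,U) = \lambda t^{-1}(\lr{D}^{-2}U,U) = \lambda t^{-1}\|\lr{D}^{-1}U\|^2,
\]
using that $\lr{D}^{-1}$ is self-adjoint, so the stated identity follows by linearity of the real part.

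Next I apply Lemma \ref{lem:kagi} to bound ${\mathsf{Re}}(SU,U)$ from below by $\delta t\big(\sum_{j=1}^2\|U_j\|^2 + (aU_3,U_3)\big) - Ct^{-1}\|\lr{D}^{-1}U\|^2$ for some constant $C>0$ which is independent of $\lambda$. Adding the nonnegative term $\lambda t^{-1}\|\lr{D}^{-1}U\|^2$ to both sides yields
\[
{\mathsf{Re}}({\tilde S}U,U) \geq \delta t\Big(\sum_{j=1}^2\|U_j\|^2 + (aU_3,U_3)\Big) + (\lambda - C)t^{-1}\|\lr{D}^{-1}U\|^2.
\]
Choosing $\lambda_0 := 2C$ ensures $\lambda - C \geq \lambda/2$ for every $\lambda \geq \lambda_0$, which is precisely the inequality claimed.

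No substantive obstacle arises at this step: the corollary is essentially a bookkeeping device designed so that the positive quadratic form $\lambda t^{-1}\|\lr{D}^{-1}U\|^2$ comfortably absorbs the Fefferman--Phong remainder $-Ct^{-1}\|\lr{D}^{-1}U\|^2$ coming from Lemma \ref{lem:kagi}, with a margin of $\lambda/2$ retained for use in the subsequent weighted energy estimates of Theorem \ref{pro:energy}. All the genuine analytic work has already been carried out in proving Lemma \ref{lem:kagi}.
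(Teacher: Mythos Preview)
Your proof is correct and follows exactly the approach the paper intends: the corollary is stated without proof immediately after Lemma~\ref{lem:kagi} precisely because it follows from that lemma by adding $\lambda t^{-1}\|\lr{D}^{-1}U\|^2$ and choosing $\lambda_0=2C$.
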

\begin{cor}
\label{cor:seiti:2}There exist $\delta_1>0$ and $\lambda_0>0$  such that 
\[
{\mathsf{Re}}({\tilde S}U,U)\geq \delta_1 t^2\|U\|^2+(\lambda/2) t^{-1}\|\lr{D}^{-1}U\|^2,\quad \lambda\geq \lambda_0.
\]
\end{cor}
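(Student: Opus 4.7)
The plan is to build directly on Lemma~\ref{lem:kagi} and upgrade its lower bound $\delta t\bigl(\sum_{j=1}^2\|U_j\|^2 + (aU_3, U_3)\bigr)$ to one of the form $\delta_1 t^2 \|U\|^2$. Two inputs suffice: the boundedness $t \leq T$ and the pointwise inequality $a(t, x, \xi) \geq c_0 t$, which follows from $\tilde{a} \geq c_0 > 0$ and $\alpha \geq 0$.

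For $j = 1, 2$ the trivial observation $\delta t \|U_j\|^2 \geq (\delta/T)\, t^2 \|U_j\|^2$ already supplies the desired $t^2$ weight. For the third component I subtract a linear-in-$t$ bound: the symbol
\[
a(t, x, \xi) - c_0 t = (t + \alpha)\tilde{a}(t, x, \xi) - c_0 t \geq c_0 \alpha \geq 0
\]
is a \emph{scalar} non-negative element of $S^0$ with seminorms uniform in $t \in [0, T]$, so the scalar Fefferman--Phong inequality (valid for scalar symbols, in contrast to the matrix setting mentioned in the remark after Lemma~\ref{lem:kagi}) applied to $a - c_0 t$ yields
\[
(aU_3, U_3) \geq c_0 t \|U_3\|^2 - C \|\lr{D}^{-1} U_3\|^2.
\]
Multiplying by $\delta t$ and rewriting the error as $(C\delta t^2)\, t^{-1}\|\lr{D}^{-1} U_3\|^2$ puts the loss on the same footing as the term $\lambda t^{-1} \|\lr{D}^{-1}U\|^2$ contributed by $\tilde{S} - S$.

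Combining these estimates with Lemma~\ref{lem:kagi} and setting $\delta_1 = \min(\delta/T,\; \delta c_0)$, I obtain
\[
{\mathsf{Re}}(\tilde{S}U, U) \geq \delta_1 t^2 \|U\|^2 + \bigl(\lambda - C - C\delta t^2\bigr)\, t^{-1} \|\lr{D}^{-1} U\|^2.
\]
Since $t \in [0, T]$ the coefficient $C + C\delta t^2$ is bounded by $C(1 + \delta T^2)$, so choosing $\lambda_0 = 2C(1 + \delta T^2)$ guarantees $\lambda - C - C\delta t^2 \geq \lambda/2$ whenever $\lambda \geq \lambda_0$, which is the stated inequality.

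I do not foresee a serious obstacle: the single delicate point is that Fefferman--Phong must be applied to the scalar symbol $a - c_0 t$ rather than to the matrix symbol $\tilde{S}$, since the matrix version fails in general as noted in the paper; but the scalar version is available and its constant depends only on finitely many seminorms of $a - c_0 t$, which are uniform on $[0, T]$ by smoothness of $\tilde{a}$ and $\alpha$.
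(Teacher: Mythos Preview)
Your proof is correct and follows essentially the same route as the paper: apply the scalar Fefferman--Phong inequality to the non-negative symbol $a - c_0 t \in S^0$ to obtain $(aU_3,U_3) \geq c_0 t\|U_3\|^2 - C\|\lr{D}^{-1}U_3\|^2$, then use $t \leq T$ to trade $t$ for $t^2$ on the first two components and absorb the resulting errors by enlarging $\lambda_0$. The only cosmetic difference is that the paper invokes Corollary~\ref{cor:seiti} (which already packages Lemma~\ref{lem:kagi} with the $\lambda t^{-1}\|\lr{D}^{-1}U\|^2$ term) as the starting point, whereas you go back to Lemma~\ref{lem:kagi} directly; the substance is identical.
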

\begin{proof}Clearly,  there exists $\delta_1>0$ such that $a\geq \delta_1 t$ by \eqref{eq:tataki}. Then from the Fefferman-Phong inequality for the symbol $a - \delta_1 t$ one has 
\[
(aU_3,U_3)\geq \delta_1 t\|U_3\|^2-C\|\lr{D}^{-1}U_3\|^2
\]
which proves the assertion thanks to Corollary \ref{cor:seiti}.
\end{proof}
%
Consider the energy $(t^{-N}e^{-\gam t}{\tilde S}U,U)$, where $(\cdot,\cdot)$ is the $L^2(\R^n)$ inner product and $N>0$, $\gam>0$ are positive parameters. Then one has
\begin{equation}
\label{eq:kiso}
\begin{split}
\dif_t(t^{-N}e^{-\gam t}{\tilde S}U,U)=-N(t^{-N-1}e^{-\gam t}{\tilde S}U,U)-\gam (t^{-N}e^{-\gam t}{\tilde S}U,U)\\
+ (t^{-N}e^{-\gam t}\dif_t SU,U)
-\lambda (N+1) t^{-N-2}e^{-\gam t}\|\lr{D}^{-1}U\|^2 - \lambda \gam t^{-N -1} e^{-\gam t}\|\lr{D}^{-1} U\|^2\\
-2{\mathsf{Im}}\,(t^{-N}e^{-\gam t}(({\tilde S}(A\lr{D}+B)U,U)-2{\mathsf{Im}}(t^{-N}e^{-\gam t}{\tilde S}F,U).
\end{split}
\end{equation}
Consider $2{\mathsf{Im}}(SA\lr{D}U,U)=-i\big((SA\lr{D}-\lr{D}A^*S)U,U\big)$. By the calculus of Weyl pseudo-differential operators for the symbol of the
operator $SA\lr{D} - \lr{D}A^*S$ we get the representation
\[
\begin{aligned}
&S\#A\#\lr{\xi}-\lr{\xi}\#A^*\#S\\
&= \sum_{1\leq |\al+\be+\delta+\nu|\leq 2}\frac{(-1)^{|\be+\delta+\nu|}}{(2i)^{|\al+\be+\delta+\nu|}\al!\be!\delta!\nu!}S^{(\al)}_{(\be+\delta)}A^{(\be)}_{(\al+\nu)}\lr{\xi}^{(\delta+\nu)}\\
&-\sum_{1\leq |\al+\be+\delta+\nu|\leq 2}\frac{(-1)^{|\al|}}{(2i)^{|\al+\be+\delta+\nu|}\al!\be!\delta!\nu!}(A^*)^{(\be)}_{(\al+\nu)}S^{(\al)}_{(\be+\delta)}\lr{\xi}^{(\delta+\nu)}+R\\
&=\sum_{|\al+\be+\delta+\nu|=1}+\sum_{|\al+\be+\delta+\nu|=2}+R
=K_1+K_2+R,
\end{aligned}
\]
where $R\in S^{-2}$ and $K_j\in S^{1-j}$ denotes the sum over $|\al+\be+\delta+\nu|=j$. Note that 
\[
S^{(\al)}_{(\be+\delta)}=\left(\begin{array}{ccc}
0&0&-a^{(\al)}_{(\be+\delta)}\\
0&2a^{(\al)}_{(\be+\delta)}&3b^{(\al)}_{(\be+\delta)}\\
-a^{(\al)}_{(\be+\delta)}&3b^{(\al)}_{(\be+\delta)}&(a^2)^{(\al)}_{(\be+\delta)}\end{array}\right)
\]
and if $|\al+\be+\nu|\neq 0$
\[
A^{(\be)}_{(\al+\nu)}=\left(\begin{array}{ccc}
0&a^{(\be)}_{(\al+\nu)}&b^{(\be)}_{(\al+\nu)}\\
0&0&0\\
0&0&0\end{array}\right)
\]
so that
\begin{equation}
\label{eq:itoa}
S^{(\al)}_{(\be+\delta)}A^{(\be)}_{(\al+\nu)}=\left(\begin{array}{ccc}
0&0&0\\
0&0&0\\
0&-a^{(\al)}_{(\be+\delta)}a^{(\be)}_{(\al+\nu)}&-a^{(\al)}_{(\be+\delta)}b^{(\be)}_{(\al+\nu)}\end{array}\right).
\end{equation}
If $|\al+\be+\nu|=0$ then
\begin{equation}
\label{eq:itob}
S_{(\delta)}A=\left(\begin{array}{ccc}
0&-a_{(\delta)}&0\\
2a_{(\delta)}&3b_{(\delta)}&0\\
3b_{(\delta)}&(a^2)_{(\delta)}-a a_{(\delta)}&-a_{(\delta)}b\end{array}\right).
\end{equation}
From Lemma \ref{lem:sou} it follows that for $|\al+\be+\delta+\nu|=1$
\begin{align*}
S^{(\al)}_{(\be+\delta)}A^{(\be)}_{(\al+\nu)}\lr{\xi}^{(\delta+\nu)}-(A^*)^{(\be)}_{(\al+\nu)}S^{(\al)}_{(\be+\delta)}\lr{\xi}^{(\delta+\nu)}=\left(\begin{array}{ccc}
0&0&0\\
0&0&\co(a)\\
0&\co(a)&\co(a^{3/2})\end{array}\right)
\end{align*}
and
\[
S_{(\delta)}A\lr{\xi}^{(\delta)}-A^*S_{(\delta)}\lr{\xi}^{(\delta)}=\left(\begin{array}{ccc}
0&\co(\sqrt{a})&\co(a)\\
\co(\sqrt{a})&\co(a)&\co(a^{3/2})\\
\co(a)&\co(a^{3/2})&\co(a^2)\end{array}\right).
\]
Therefore we deduce
\begin{equation}
\label{eq:sakua}
S+\epsilon_1 t{\mathsf{Im}}K_1=\left(\begin{array}{ccc}
3&\epsilon_1 \co(t\sqrt{a})&-a+\epsilon_1 \co(ta)\\
\epsilon_1 \co(t\sqrt{a})&2a+\epsilon_1 \co(ta)&3b+\epsilon_1 \co(ta)\\
-a+\epsilon_1 \co(ta)&3b+\epsilon_1 \co(ta)&a^2+\epsilon_1 \co(ta^{3/2})\end{array}\right).
\end{equation}
On the other hand,
\begin{equation}
\label{eq:sakub}
t\dif_t S=\left(\begin{array}{ccc}
0&0&\co(t)\\
0&\co(t)&\co(ta^{1/2})\\
\co(t)&\co(ta^{1/2})&\co(ta)\end{array}\right).
\end{equation}
This proves that there exists $N_1>0$ such that one has 
\begin{equation}
\label{eq:iru}
N_1S+ t{\mathsf{Im}}K_1- t \dif_t S=N_1(S+\epsilon_1t{\mathsf{Im}}K_1- \epsilon_1t \dif_t S)\gg 0,
\end{equation}
where $\epsilon_1=N_1^{-1}$.
\begin{lem}
\label{lem:Cinfty}
For any $\epsilon>0$ there exists $C_{\epsilon}>0$ such that
\[
\begin{aligned}
&{\mathsf{Re}}\big((N_1S- t\dif_tS-i tK_1)U,U\big)\\
&\geq -\epsilon t\big(\sum_{j=1}^2\|U_j\|^2+(aU_3,U_3)\big)-C_{\epsilon}t^{-1}\|\lr{D}^{-1}U\|^2.
\end{aligned}
\]
\end{lem}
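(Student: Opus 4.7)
The plan is to identify the left-hand side with $({\rm Op}^w(M)U,U)$ for a non-negative definite matrix symbol $M\in S^0$, and then apply the weighted Fefferman--Phong scheme from the proof of Lemma \ref{lem:kagi}. Since $S$ and $\dif_t S$ are real symmetric, their Weyl quantizations are self-adjoint, so those two terms already give real inner products. For the third term, the operator $SA\lr{D}-\lr{D}A^*S$ is skew-adjoint; combining this with the coefficient $1/(2i)^{|\al+\be+\delta+\nu|}=1/(2i)$ for the order-one terms and the explicit formulas \eqref{eq:itoa}--\eqref{eq:itob}, one sees that $K_1$ is purely imaginary with ${\mathsf{Re}}\,K_1=0$ and skew-Hermitian, i.e.\ $K_1=i\,{\mathsf{Im}}\,K_1$ with ${\mathsf{Im}}\,K_1$ a real symmetric $3\times3$ matrix symbol in $S^0$. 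Consequently $-it\,{\rm Op}^w(K_1)=t\,{\rm Op}^w({\mathsf{Im}}\,K_1)$ is self-adjoint and
\[
{\mathsf{Re}}\big((N_1 S-t\dif_t S-it K_1)U,U\big)=\big({\rm Op}^w(M)U,U\big),\qquad M:=N_1 S-t\dif_t S+t\,{\mathsf{Im}}\,K_1.
\]
By \eqref{eq:iru}, $M\gg 0$ as a matrix symbol.

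Next I would imitate the proof of Lemma \ref{lem:kagi} applied to $M$ in place of $Q$. Let $M_F$ denote the Friedrichs part of $M$; then $(M_F U,U)\geq 0$, and \eqref{eq:sGar} yields
\[
M_F-{\rm Op}^w(M)={\rm Op}^w\Big(\sum_{2\leq|\al+\be|\leq 3}\psi_{\al,\be}(\xi)M^{(\al)}_{(\be)}\Big)+{\rm Op}^w(R'),\qquad R'\in S^{-2}_{1/2,0}.
\]
It therefore suffices to bound each ${\mathsf{Re}}\big({\rm Op}^w(\psi_{\al,\be}M^{(\al)}_{(\be)})U,U\big)$ with $|\al+\be|=2,3$ by $\epsilon t\big(\sum_{j=1}^2\|U_j\|^2+(aU_3,U_3)\big)+C_\epsilon t^{-1}\|\lr{D}^{-1}U\|^2$. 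From the matrix forms \eqref{eq:sakua}--\eqref{eq:sakub}, $M$ has the same qualitative entry structure as $S$ plus $\co(t)$-perturbations of the same Glaeser type, so by Lemma \ref{lem:sou} and Glaeser's inequality its derivatives $M^{(\al)}_{(\be)}$ obey exactly the bounds exploited in Lemma \ref{lem:kagi}. The off-diagonal and upper two diagonal entries are handled by Cauchy--Schwarz with an $\epsilon$-splitting as in \eqref{eq:komazo}. For the $(3,3)$-entry, which contains $N_1 a^2$ plus controlled $\co(t)$-corrections, one repeats the scalar Fefferman--Phong step \eqref{eq:FeP}--\eqref{eq:hana}: write the relevant piece as ${\mathsf{Re}}(T\#\lr{\xi}^{-3/2})+R''$ with $Ca-T^2\geq 0$, and apply the scalar Fefferman--Phong inequality.

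Combining these estimates yields ${\mathsf{Re}}({\rm Op}^w(M)U,U)\geq (M_F U,U)-\epsilon t\big(\sum_{j=1}^2\|U_j\|^2+(aU_3,U_3)\big)-C_\epsilon t^{-1}\|\lr{D}^{-1}U\|^2$, and $(M_F U,U)\geq 0$ gives the claim (the $\|{\rm Op}^w(R')U\|\lesssim\|\lr{D}^{-1}U\|$ remainder is absorbed into the last term using $\|\lr{D}^{-1}U\|^2\leq T t^{-1}\|\lr{D}^{-1}U\|^2$). The main technical obstacle is verifying that the perturbative contributions from $t\,{\mathsf{Im}}\,K_1-t\dif_t S$ do not spoil the Glaeser-type bounds required for the scalar Fefferman--Phong step on the $(3,3)$-entry. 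This causes no trouble because those contributions carry a pre-existing $t$-factor and involve only derivatives of $a,b$ controlled by $\sqrt{a}$ via Lemma \ref{lem:sou} (with $a\geq t$), keeping them compatible with the matrix structure exploited in Lemma \ref{lem:kagi}.
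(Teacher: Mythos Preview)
Your proposal is correct and follows essentially the same route as the paper: you set $M=N_1S-t\dif_tS+t\,{\mathsf{Im}}\,K_1$ (the paper's $Q$), observe $M\gg 0$ by \eqref{eq:iru}, apply the Friedrichs representation \eqref{eq:sGar}, and then repeat the Cauchy--Schwarz/scalar Fefferman--Phong scheme from Lemma~\ref{lem:kagi} on the derivative terms $\psi_{\al,\be}M^{(\al)}_{(\be)}$. The paper compresses the last step into the single observation that $\psi_{\al,\be}Q^{(\al)}_{(\be)}$ has all entries $\co(\lr{\xi}^{-1})$ with the $(3,3)$-entry $\co(\lr{\xi}^{-1}\sqrt{a})$, which is exactly the structure your argument exploits.
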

\begin{proof} 
Setting $Q=N_1S- t\dif_tS+ t{\mathsf{Im}}K_1\in S^0$ one has $Q\gg 0$ by \eqref{eq:iru}. Taking \eqref{eq:sGar} into account and repeating the same arguments proving \eqref{eq:FeP}, it is enough to show
\[
\psi_{\al,\be}Q^{(\al)}_{(\be)}=
\left(\begin{array}{ccc}
\co(\lr{\xi}^{-1})&\co(\lr{\xi}^{-1})&\co(\lr{\xi}^{-1})\\
\co(\lr{\xi}^{-1})&\co(\lr{\xi}^{-1})&\co(\lr{\xi}^{-1})\\
\co(\lr{\xi}^{-1})&\co(\lr{\xi}^{-1})&\co(\lr{\xi}^{-1}\sqrt{a})\end{array}\right)
\]
for $2\leq |\al+\be|\leq 3$. To check this it suffices to apply \eqref{eq:sakua}, \eqref{eq:sakub} and $t=\co(a)$.
\end{proof}
We turn to $K_2$. From \eqref{eq:itoa} and \eqref{eq:itob} if follows that 
\[
K_2=\left(\begin{array}{ccc}
\co(\lr{\xi}^{-1})&\co(\lr{\xi}^{-1})&\co(\lr{\xi}^{-1})\\
\co(\lr{\xi}^{-1})&\co(\lr{\xi}^{-1})&\co(\lr{\xi}^{-1})\\
\co(\lr{\xi}^{-1})&\co(\lr{\xi}^{-1})&\co(\lr{\xi}^{-1}\sqrt{a})\end{array}\right).
\]
Repeating the same arguments as above, we get
\[
|(K_2U,U)|\leq \epsilon \sum_{j=1}^2\|U_j\|^2+C\epsilon^{-1}\|\lr{D}^{-1}U\|^2+\epsilon \|TU_3\|^2,
\]
where $T\in S^0$ and $|T|\leq C\sqrt{a}$. Applying the Fefferman-Phong inequality again, one concludes
\[
|(K_2U,U)|\leq \epsilon \big(\sum_{j=1}^2\|U_j\|^2+(aU_3,U_3)\big)+C\epsilon^{-1}\|\lr{D}^{-1}U\|^2.
\]
Noting that $(3,3)$-entry of $A$ is zero, it is clear that 
\begin{align*}
t^{-N -1}\lambda\big|\big((\lr{D}^{-2}A\lr{D}-\lr{D}A^*\lr{D}^{-2})U,U\big)\big|
\leq \epsilon t^{-N} \sum_{j=1}^2\|U_j\|^2+C\lambda^2\epsilon^{-1}t^{-N -2}\|\lr{D}^{-1}U\|^2.
\end{align*}
On the other hand, it is easy to see that
\[ 
t^{-N -1}\lambda\big|\big((\lr{D}^{-2}B-B^*\lr{D}^{-2})U,U\big)\big| \leq C_1 \lambda t^{-N - 1}\|\lr{D}^{-1} U\|^2
\]
with a constant $C_1 > 0$ independent of $\lambda$ and $t$.
Therefore from the above estimates one deduces
\begin{equation}
\label{eq:shima}
\begin{split}
\dif_t{\mathsf{Re}}(t^{-N}e^{-\gam t}{\tilde S}U,U)\leq -2{\mathsf{Im}}(t^{-N}e^{-\gam t}{\tilde S}F,U)
-(N-N_1)t^{-N-1}e^{-\gam t}{\mathsf{Re}}\big({\tilde S}U,U)\\
+\Bigl[C_{\ep}-\lambda\Bigl( (N + 1) - \lambda C \ep^{-1}\Bigr)\Bigr]t^{-N-2}e^{-\gam t}\|\lr{D}^{-1}U\|^2
\\
+2\epsilon t^{-N}e^{-\gam t}\big(\sum_{j=1}^2\|U_j\|^2+(aU_3,U_3)\big)-2t^{-N}e^{-\gam t}{\mathsf{Im}}\,({\tilde S}BU,U)\\
-(\gam - \lambda C_1) t^{-N-1}e^{-\gam t}\|\lr{D}^{-1}U\|^2,
\end{split}
\end{equation}
where $C$ is independent of $t$.

Consider $2{\mathsf{Im}}({\tilde S}BU,U)$ and recall that ${\tilde S}\gg 0$ by Corollary \ref{cor:seiti}. Consequently,
\begin{equation}
\label{eq:raiu}
\begin{split}
&2|({\tilde S}BU,U)|\leq N^{-1/2}(t{\tilde S}BU,BU)+N^{1/2}(t^{-1}{\tilde S}U,U)\\
&=N^{-1/2}(t^{-1}t^2B^*{\tilde S}BU,U)+N^{1/2}(t^{-1}{\tilde S}U,U)\\
&\leq N^{-1/2}(t^{-1}t^2B^*SBU,U)+N^{1/2}(t^{-1}\tilde{S}U,U)+ C\lambda N^{-1/2}\|\lr{D}^{-1}U\|^2.
\end{split}
\end{equation}
Note that
\[
B^*\#S\#B-3 \left(\begin{array}{c}
{\bar b}_{10}\\
{\bar b}_{11}\\
{\bar b}_{12}\end{array}\right)(b_{10},b_{11},b_{12})\in S^{-1}.
\]
\begin{lem}
\label{lem:parao}There exists $N_2>0$ such that for any $\epsilon>0$ there exists $D_{\epsilon}>0$ such that
\[
{\mathsf{Re}}\big((N_2S-t^2B^*SB)U,U\big)\geq -\epsilon t(\sum_{j=1}^2\|U_j\|^2+(aU_3,U_3))-D_{\epsilon}t^{-1}\|\lr{D}^{-1}U\|^2.
\]
\end{lem}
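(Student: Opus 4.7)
The strategy closely mirrors the proof of Lemma~\ref{lem:kagi}, the new ingredient being the rank-one structure of the principal part of $B^*SB$. Since the Weyl symbol $B$ has only its first row non-vanishing, direct matrix multiplication yields $B^*SB = 3\bar v v^{T}$ with $v={}^{t}(b_{10},b_{11},b_{12})$; the Weyl composition formula then gives $B^*\# S\# B - 3\bar v v^{T}\in S^{-1}$, which is precisely the identity recorded just before the lemma. The matrix $3\bar v v^{T}$ is Hermitian, positive semi-definite and bounded in $S^0$, with $\langle 3\bar v v^{T}U,U\rangle = 3|v\cdot U|^{2}\leq 3M^{2}\sum_{j=1}^{3}|U_j|^{2}$, where $M=\sup|v|$.

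The first substantive step is to choose $N_2$ large enough that, with $J={\rm diag}(1,1,a)$, one has the pointwise matrix inequality
\[
Q_0 := N_2 S - 3t^{2}\bar v v^{T} \gg 2\delta_{2}\,tJ
\]
for some $\delta_{2}>0$. Combining Lemma~\ref{lem:G2} ($S\gg 2\delta tJ$) with the pointwise estimate
\[
3t^{2}\langle \bar v v^{T}U,U\rangle \leq 3M^{2}T\cdot t\sum_{j=1}^{2}|U_j|^{2} + \frac{3M^{2}}{\delta_{1}}\cdot t\,a|U_3|^{2},
\]
valid because $t\leq T$ and $a\geq\delta_{1}t$ by \eqref{eq:tataki}, this inequality holds as soon as $N_2$ exceeds a constant depending only on $M$, $T$, $\delta$ and $\delta_{1}$.

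Once matrix positivity is secured, the argument of Lemma~\ref{lem:kagi} applies almost verbatim. One applies the Friedrichs symmetrization to $Q:=Q_0-\delta_{2}tJ\gg 0$ and uses the representation \eqref{eq:sGar}: the subprincipal corrections $\sum_{2\leq|\alpha+\beta|\leq 3}\psi_{\alpha,\beta}Q^{(\alpha)}_{(\beta)}$ split into a contribution from $N_2 S-\delta_{2}tJ$, handled verbatim via Lemma~\ref{lem:sou} and the scalar Fefferman--Phong inequality \eqref{eq:FeP} applied to the $(3,3)$-entry exactly as in the proof of Lemma~\ref{lem:kagi}, and a contribution from $3t^{2}\bar v v^{T}$ whose entries and derivatives carry an extra prefactor $t^{2}$; since $t^{2}\leq T^{3}t^{-1}$ these are absorbable directly into $D_{\epsilon}t^{-1}\|\lr{D}^{-1}U\|^{2}$. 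Likewise the residual $S^{-1}$ discrepancy $t^{2}{\rm Op}^{w}(B^{*}\# S\# B - 3\bar v v^{T})$ is bounded in $L^2$ by $Ct^{2}\|\lr{D}^{-1}U\|\,\|U\|$, which AM--GM splits into $\epsilon t\|U\|^{2}$ plus a $t^{-1}\|\lr{D}^{-1}U\|^{2}$-type term (again using $t\leq T$). The main obstacle is the matrix-valued bookkeeping of these corrections across the nine entries of $Q$, but the structural parallel with Lemma~\ref{lem:kagi} renders it essentially mechanical.
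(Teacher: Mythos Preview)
Your proposal is correct and follows essentially the same approach as the paper: establish nonnegative definiteness of the symbol $N_2S-t^2B^*SB$ (equivalently $S-\epsilon_1 t^2B^*SB$ with $\epsilon_1=N_2^{-1}$), then feed this into the Friedrichs symmetrization argument already used for Lemmas~\ref{lem:kagi} and~\ref{lem:Cinfty}. The paper simply displays the matrix $S-\epsilon_1 t^2B^*SB$ with every entry of $S$ perturbed by $\epsilon_1\co(t^2)$ and asserts positivity for small $\epsilon_1$, whereas you obtain the same conclusion more transparently by invoking Lemma~\ref{lem:G2} ($S\gg 2\delta tJ$) together with the elementary pointwise bound $3t^2{\bar v}v^{T}\leq CtJ$; this is a minor but pleasant simplification. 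Two small remarks: the Friedrichs correction terms coming from $3t^2{\bar v}v^{T}$ land in $t^2S^{-1}$ (not $t^2S^{-2}$), so they are not literally absorbed ``directly'' into $D_\epsilon t^{-1}\|\lr{D}^{-1}U\|^2$ but require the same AM--GM splitting you already perform for the $S^{-1}$ residual; and the resulting $\epsilon t\|U\|^2$ contribution is converted to $\epsilon' t(\sum_{j=1}^2\|U_j\|^2+(aU_3,U_3))$ via the Fefferman--Phong inequality $t\|U_3\|^2\leq\delta_1^{-1}(aU_3,U_3)+C\|\lr{D}^{-1}U_3\|^2$, exactly as in Corollary~\ref{cor:seiti:2}.
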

\begin{proof}Recall
\[
S-\epsilon_1 t^2B^*SB=\left(\begin{array}{ccc}
3+\epsilon_1 \co(t^2)&\epsilon_1 \co(t^2)&-a+\epsilon_1 \co(t^2)\\
\epsilon_1 \co(t^2)&2a+\epsilon_1 \co(t^2)&3b+\epsilon_1 \co(t^2)\\
-a+\epsilon_1 \co(t^2)&3b+\epsilon_1 \co(t^2)&a^2+\epsilon_1 \co(t^2)\end{array}\right)
\]
which proves $S-\epsilon_1 t^2B^*SB\gg 0$ with some $\epsilon_1>0$. Then the rest of the proof is just a repetition of the proof of Lemma \ref{lem:Cinfty}.
\end{proof}
According to Lemma \ref{lem:parao} and \eqref{eq:raiu}, one has
\begin{equation}
\label{eq:ike}
\begin{split}
2|({\tilde S}BU,U)|\leq N_2^{1/2}(t^{-1}\tilde{S}U,U)+\epsilon t(\sum_{j=1}^2\|U_j\|^2+(aU_3,U_3))\\
+D_{\epsilon}t^{-1}\|\lr{D}^{-1}U\|^2+ C\lambda N_2^{-1/2}\|\lr{D}^{-1}U\|^2.
\end{split}
\end{equation}
From \eqref{eq:shima} and \eqref{eq:ike} it follows that
\[
\begin{aligned}
\dif_t{\mathsf{Re}}(t^{-N}e^{-\gam t}{\tilde S}U,U)\leq -2{\mathsf{Im}}(t^{-N}e^{-\gam t}{\tilde S}F,U)\\
-(N-N_1-N_2^{1/2})t^{-N-1}e^{-\gam t}{\mathsf{Re}}({\tilde S}U,U)\\
+\Bigl[C_{\ep}-\lambda\Bigl( (N + 1) - \lambda C \ep^{-1} \Bigr)\Bigr]t^{-N-2}e^{-\gam t}\|\lr{D}^{-1}U\|^2\\
+3\epsilon t^{-N}e^{-\gam t}\big(\sum_{j=1}^2\|U_j\|^2+(aU_3,U_3)\big)\\
-(\gam- D_{\ep}-C_1 \lambda - C \lambda t N_2^{-1/2})t^{-N-1}e^{-\gam t}\|\lr{D}^{-1}U\|^2.
\end{aligned}
\]
Note that
\[
\begin{aligned}
2|(t^{-N}e^{-\gam t}{\tilde S}F,U)|\leq 2(t^{-N+1}e^{-\gam t}{\tilde S}F,F)^{1/2}(t^{-N-1}e^{-\gam t}{\tilde S}U,U)^{1/2}\\
\leq (t^{-N+1}e^{-\gam t}{\tilde S}F,F)+(t^{-N-1}e^{-\gam t}{\tilde S}U,U).
\end{aligned}
\]
Denote $N^*=N_1+ 2N_2^{1/2} + 2$ and we choose $0 <3\epsilon\leq \delta$. The term with coefficient $C_{\ep}$ can be absorbed by 
the term $- {\mathsf{Re}}\,(\tilde{S}U, U)$ by applying Corollary 3.2 and taking $\lambda /2 > C_{\ep}$. Fixing $\ep$ and $\lambda$, we choose $N > N^*$ so that
$$N  +1 > \lambda C \ep^{-1}.$$

Finally we choose $\gamma$ such that $\gamma-  D_{\ep} -C_1 \lambda- C \lambda N_2^{-1/2}T\geq 0$. Then we have
\begin{equation}
\label{eq:keiyaku}
\dif_t{\mathsf{Re}}(t^{-N}e^{-\gam t}{\tilde S}U,U)\leq (t^{-N+1}e^{-\gam t}{\tilde S}F,F)-(N-N^*){\mathsf{Re}}\,(t^{-N-1}e^{-\gam t}{\tilde S}U,U).
\end{equation}
Integrating \eqref{eq:keiyaku} in $\tau$ from $\epsilon>0$ to $t$ and taking  Corollary \ref{cor:seiti:2} into account, one obtains
\begin{prop}
\label{pro:senken} Assume the condition $(3.10)$. Let $U \in C^{\infty}(\R_t: C_0^{\infty}(\R^n))).$ Then there exist $\delta>0$ and $C>0$ such that
\[
\begin{aligned}
\delta t^{-N+2}e^{-\gam t}\|U(t)\|^2+\delta(N-N^*)\int_{\epsilon}^t\tau^{-N+1}e^{-\gam \tau}\|U(\tau)\|^2 d\tau\\
\leq C\epsilon^{-N-1}e^{-\gam \epsilon}\|U(\epsilon)\|^2+\int_{\epsilon}^t(\tau^{-N+1}e^{-\gam \tau}{\tilde S}F(\tau),F(\tau))d\tau.
\end{aligned}
\]
\end{prop}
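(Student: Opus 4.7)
The plan is to integrate the pointwise differential inequality \eqref{eq:keiyaku} over $[\epsilon,t]$ and then convert the resulting $\tilde S$-quadratic forms into $L^2$-norms of $U$ by means of Corollary \ref{cor:seiti:2}. Directly integrating in $\tau$ gives
\[
{\mathsf{Re}}(t^{-N}e^{-\gam t}{\tilde S}U(t),U(t))+(N-N^*)\int_\epsilon^t {\mathsf{Re}}(\tau^{-N-1}e^{-\gam\tau}{\tilde S}U(\tau),U(\tau))\,d\tau
\]
\[
\le {\mathsf{Re}}(\epsilon^{-N}e^{-\gam \epsilon}{\tilde S}U(\epsilon),U(\epsilon))+\int_\epsilon^t (\tau^{-N+1}e^{-\gam \tau}{\tilde S}F(\tau),F(\tau))\,d\tau,
\]
which already matches the source integral on the right-hand side of the claim.

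Next I would bound the two $\tilde S$-quadratic forms on the left from below by applying Corollary \ref{cor:seiti:2} at times $t$ and $\tau$, which gives ${\mathsf{Re}}(\tilde S V,V)\ge \delta_1 t^2\|V\|^2$. Multiplying through by the prefactors $t^{-N}e^{-\gam t}$ and $\tau^{-N-1}e^{-\gam\tau}$ respectively produces the two positive contributions $\delta t^{-N+2}e^{-\gam t}\|U(t)\|^2$ and $\delta(N-N^*)\int_\epsilon^t \tau^{-N+1}e^{-\gam\tau}\|U(\tau)\|^2\,d\tau$, where $\delta$ is taken to be the smaller of the two constants produced by the two applications of Corollary \ref{cor:seiti:2} (so that a single $\delta$ works in both terms).

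For the initial-time term on the right, I would bound $\tilde S$ from above at $\tau=\epsilon$. Since $S\in S^0$ acts boundedly on $L^2$, one has $|(SV,V)|\leq C_1\|V\|^2$; the extra piece $\lambda\, t^{-1}\lr{D}^{-2}I$ has operator norm at most $\lambda\epsilon^{-1}$ at time $\epsilon$. Combining these yields $|({\tilde S}U(\epsilon),U(\epsilon))|\le C\epsilon^{-1}\|U(\epsilon)\|^2$ for sufficiently small $\epsilon>0$, and multiplication by $\epsilon^{-N}e^{-\gam\epsilon}$ produces exactly the $C\epsilon^{-N-1}e^{-\gam\epsilon}\|U(\epsilon)\|^2$ appearing in the proposition.

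There is no real obstacle at this stage: the hard work has already been done in establishing \eqref{eq:keiyaku} itself (symmetrizer construction, the Fefferman-Phong-type inequality of Lemma \ref{lem:kagi}, the commutator bookkeeping through $K_1$, $K_2$, and the treatment of $B$ via Lemma \ref{lem:parao}). The present proposition is the output end of that machinery, and its proof is just the integration and scaling described above, with the only mild care being to rename constants so that a single $\delta$ controls both positive terms on the left.
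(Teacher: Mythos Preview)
Your proposal is correct and follows exactly the approach of the paper: the paper's proof is the single sentence ``Integrating \eqref{eq:keiyaku} in $\tau$ from $\epsilon>0$ to $t$ and taking Corollary \ref{cor:seiti:2} into account, one obtains'' Proposition~\ref{pro:senken}, and you have simply unpacked that sentence in full detail, including the upper bound on the initial term via $\tilde S=S+\lambda t^{-1}\lr{\xi}^{-2}I$ with $S\in S^0$.
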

\begin{rem} In the case when the condition $(H)$ is globally satisfied by an argument similar to that used for Lemma $3.4$ we may establish the inequality

\[
{\mathsf{Re}}(SU,U)\geq \delta t^2\big(\sum_{j=1}^2\|U_j\|^2+(aU_3,U_3)\big)-Ct^{-2}\|\lr{D}^{-1}U\|^2.
\]
However with this weaker version of the Fefferman-Phong inequality we must change the weight $t^{-N}$ by $e^{ \frac{N}{t}}$ and  use the energy $(e^{\frac{N}{t} }e^{- \gamma t} (S + \lambda t^{-2} \la D \ra^{-2})U, U)$. This will be a subject of a further work.

\end{rem}
\section{Microlocal energy estimates}
We turn to the original differential operator 
\[
P=D_t^3-\sum_{|\al|=2}a_{\al}(t,x)D_x^{\al}D_t+\sum_{|\al|=3}a_{\al}(t,x)D_x^{\al}+\sum_{|\be|+j\leq 2}b_{j\be}(t,x)D_x^{\be}D_t^j
\]
with which we are working. Let $\{\chi_{\al}\}$ be a finite partition of unity with $\chi_{\al}(x,\xi)\in S^0$ so that
\[
\sum_{\alpha} \chi^2_{\al}(x,\xi)=\chi^2(x),
\]
where $\chi(x)=1$ for $|x|\leq r_1$ and $0$ for $|x|\geq r_2$. Let $u \in C^{\infty}(\R_t: C_0^{\infty}(\R^{n}))$ be such that $u = 0$ for $|x| \geq r_1.$ We can assume that  \eqref{eq:2.10} is verified globally. Indeed we have the following
\begin{lem}
\label{lem:kakutyo}
Assume that \eqref{eq:2.10} is satisfied in a conic neighborhood of $(0,\xi_0)$. Then there exist extensions of $\alpha(x,\xi)$, ${\tilde a}(t,x,\xi)$ and $b(t,x,\xi)$ to $S^0$ such that \eqref{eq:2.10} holds globally.
\end{lem}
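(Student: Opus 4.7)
My plan is to glue the given symbols to trivial ones outside a smaller conic neighborhood of $(0,\xi_0)$, using an additive shift of $\alpha$ and a multiplicative cut-off of $b$. Let $W$ be a conic neighborhood of $(0,\xi_0)$ on which \eqref{eq:2.10} holds with some constant $\delta_0>0$. I would pick a smaller conic neighborhood $W_1$ of $(0,\xi_0)$ with $\overline{W_1}\subset W$ and a real cut-off $\chi(x,\xi)\in S^0$, homogeneous of degree $0$ for $|\xi|\geq 1$, with $0\leq \chi\leq 1$, $\chi\equiv 1$ on $W_1$ and $\supp\chi\subset W$, and define
\[
\alpha^*=\alpha+M(1-\chi),\qquad \tilde a^*=\tilde a,\qquad b^*=\chi\, b,
\]
for a large constant $M>0$. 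These lie in $S^0$, satisfy $\alpha^*\geq 0$, $\tilde a^*\geq c_0$, and coincide with $\alpha, \tilde a, b$ on $W_1$. Writing $a^*=(t+\alpha^*)\tilde a^*$ and $\Delta^*=4(a^*)^3-27(b^*)^2$, the task reduces to verifying the global bound $\Delta^*\geq \delta\, t(t+\alpha^*)^2$ for some $\delta>0$.

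I would split the verification into three regions. On $W_1$ the construction is trivial, so \eqref{eq:2.10} transfers verbatim. Off $\supp\chi$ one has $b^*=0$ and $\alpha^*=\alpha+M$, giving $\Delta^*\geq 4c_0^3(t+\alpha+M)^3\geq 4c_0^3 M(t+\alpha^*)^2$, which yields the estimate as soon as $\delta\leq 4c_0^3M/T$. On the transition region $\supp\chi\setminus W_1$ one still has access to \eqref{eq:2.10} for the original symbols; setting $v=M(1-\chi)\in[0,M]$ and expanding the cubes gives the identity
\[
\Delta^*=\Delta+4\tilde a^3\bigl(3(t+\alpha)^2 v+3(t+\alpha)v^2+v^3\bigr)+27 b^2\bigl(2v/M-(v/M)^2\bigr),
\]
in which the last bracket is non-negative for $v\in[0,M]$. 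Combined with $\Delta\geq \delta_0 t(t+\alpha)^2$ and $\tilde a\geq c_0$, the resulting lower bound can be compared term by term with $\delta t(t+\alpha^*)^2=\delta t\bigl((t+\alpha)^2+2(t+\alpha)v+v^2\bigr)$, and \eqref{eq:2.10} follows globally once $\delta$ is taken small in terms of $\delta_0$ and $c_0^3$.

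The main obstacle is the transition zone, where $b^*$ is still nontrivial while the only control on $\Delta$ is the hypothesis \eqref{eq:2.10} itself. Here the gain has to come entirely from the additive shift $M(1-\chi)$ in $\alpha$, which generates the three positive remainders in the expansion above; the cubic in $v$ provides the extra slack that absorbs the quadratic target $(t+\alpha+v)^2$. The constants $M$ and $\delta$ are coupled ($M$ large enough to control the off-support zone, $\delta$ small relative to both $\delta_0$ and $c_0^3$), but a single admissible pair $(M,\delta)$ can be fixed once $T$ and the original constants $\delta_0, c_0$ are given.
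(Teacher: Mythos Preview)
Your argument is correct and follows essentially the same strategy as the paper's own proof: cut off $b$ multiplicatively, shift $\alpha$ additively by a large constant times a cutoff that vanishes near $(0,\xi_0)$, and verify \eqref{eq:2.10} region by region using the binomial expansion of $(t+\alpha+v)^3$.

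The only noteworthy differences are cosmetic. The paper uses two independent cutoffs $\chi_1,\chi_2$ (one to truncate $\alpha$ and $b$, another to introduce the shift $M\chi_2$), which forces a separate treatment of the annulus $W\setminus V$ where the truncated $b$ is controlled directly by $4M^3\tilde a^3\geq 27(\chi_1 b)^2$. Your single cutoff $\chi$ with $\alpha^*=\alpha+M(1-\chi)$ and $b^*=\chi b$ is slightly more economical: your identity for $\Delta^*$ in terms of $\Delta$ and $v=M(1-\chi)$ handles the whole transition zone $\supp\chi\setminus W_1$ at once, and the non-negativity of $2v/M-(v/M)^2$ lets you discard the $b$-correction cleanly. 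A very minor point: you take $\tilde a^*=\tilde a$ and use $\alpha$ off $\supp\chi$, which tacitly assumes these are already globally defined $S^0$ symbols with $\tilde a\geq c_0$ and $\alpha\geq 0$; the paper explicitly extends $\tilde a$ first. This is a one-line preliminary step and does not affect your argument.
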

\begin{proof}
Assume that \eqref{eq:2.10} is satisfied in a conic neighborhood $W$ of $(0,\xi_0)$. Choose conic neighborhoods $U$, $V$ of $(0,\xi_0)$ such that $U\Subset V\Subset W$. Take $0\leq\chi_i(x,\xi)\in S^0$, $i=1,2$ such that $\chi_1=1$ on $V$ and $0$ outside $W$ and $\chi_2=0$ on $U$ and $1$ outside $V$. Extend ${\tilde a}$ outside $W$ so that ${\tilde a}\in S^0$ and ${\tilde a}\geq c_0>0$. Denote the extended symbol again by ${\tilde a}$. Define ${\tilde \alpha}=\chi_1\alpha+M\chi_2\in S^0$ and ${\tilde b}=\chi_1b\in S^0$,  where $M>0$ is a positive constant. If $(x,\xi)\in V$ we see that
\begin{align*}
\Delta = 4(t+\alpha+M\chi_2)^3{\tilde a}^3-27b^2\geq 4(t+\alpha)^3{\tilde a}^3-27b^2\\
+12\big((t+\alpha)^2M\chi_2+(t+\alpha)M^2\chi_2^2\big){\tilde a}^3\\
\geq \delta t(t+\alpha)^2+c_0't(t+\alpha)M\chi_2+c_0'tM^2\chi_2^2\\
\geq\delta' t(t+\alpha+M\chi_2)^2=\delta' t(t+{\tilde \alpha})^2.
\end{align*}
If $(x,\xi)$ is outside $W$,  it is clear that $\Delta =4(t+M)^3{\tilde a}^3\geq \delta't(t+M)^2=\delta't(t+{\tilde \alpha})^2$. On the other hand, if $(x,\xi)\in W\setminus V$ one has
\begin{align*}
\Delta =4(t+\chi_1\alpha+M)^3{\tilde a}^3-27(\chi_1b)^2\\
\geq 
4 t\big((t+\chi_1\alpha)^2+3(t+\chi_1\alpha)M+3tM^2\big){\tilde a}^3+4M^3{\tilde a}^3-27(\chi_1b)^2\\
\geq \delta' t(t+\chi_1\alpha+M)^2=\delta' t(t+{\tilde\alpha})^2
\end{align*}
choosing $M$ such that $4M^3{\tilde a}^3\geq 27(\chi_1b)^2$.
\end{proof}
We consider pseudo-differential operators with symbols $\chi_{\alpha}(x, \xi)$ and without loss of generality we can assume that $\chi_{\alpha}(x, \xi) = 0$ for $|x| \geq r_2$. First we shall  estimate $\chi_{\al}u \in C_0^{\infty}(\R^{n+1}).$ To do this we observe that
\[
P\chi_{\al}u-\chi_{\al}Pu=R_{\al}u,
\]
where $R_{\al}$ has the form
\begin{equation}
\label{eq:kaji}
c_{1, \al}D_t^2u+c_{2, \al}D_t\lr{D}u+c_{3,\al}\lr{D}^2u=\sum_{j=1}^3c_{j, \al}U_j.
\end{equation}
Here $U={^t}(D_t^2u,D_t\lr{D}u,\lr{D}^2u)=(U_1,U_2,U_3)$ again and
 $c_{j, \al}\in S^0$ are symbols vanishing for $|x| \geq r_2.$  Thanks to Lemma \ref{lem:kakutyo} extending $P$ outside the support of $\chi_{\al}$, we can assume that $P$ has the form \eqref{eq:takata} and \eqref{eq:jyokenE} is satisfied. Then  with $U_{\al}={^t}(D_t^2\chi_{\al} u,D_t\lr{D}\chi_{\al}u,\lr{D}^2\chi_{\al}u)$ the equation can be written as
\begin{equation}
\label{eq:shino}
D_tU_{\al}=(A\lr{D}+B)U_{\al}+F_{\al},\quad F_{\al}={^t}\Bigl(\chi_{\al}f+\sum_{j=1}^3c_{j, \al}U_j,0,0\Bigr),
\end{equation}
 From Proposition \ref{pro:senken}  we deduce
\begin{equation}
\label{eq:onisi}
\begin{split}
\delta t^{-N+2}e^{-\gam t}\|U_{\al}(t)\|^2+\delta (N-N^*)\int_{\epsilon}^t\tau^{-N+1}e^{-\gam \tau}\|U_{\al}(\tau)\|^2d\tau\\
\leq C\epsilon^{-N-1}e^{-\gam \epsilon}\|U_{\al}(\epsilon)\|^2+\int_{\epsilon}^t(\tau^{-N+1}e^{-\gam \tau}{\tilde S}F_{\al}(\tau),F_{\al}(\tau))d\tau.
\end{split}
\end{equation}
From \eqref{eq:shino} it follows that $
|({\tilde S}F_{\al},F_{\al})|\leq C(\|f\|^2+\|U\|^2)$ and hence 
\begin{equation}
\label{eq:ota}
\sum_{\al}\int_{\epsilon}^t(\tau^{-N+1}e^{-\gam \tau}{\tilde S}F_{\al}(\tau),F_{\al}(\tau))d\tau\leq C \int_{\epsilon}^t\tau^{-N+1}e^{-\gam \tau}(\|f(\tau)\|^2+\|U(\tau)\|^2)d\tau.
\end{equation}
Since
\[
\sum_{\al}\chi_{\al}\#\lr{\xi}^k\#\chi_{\al}=\chi\#\lr{\xi}^k\#\chi+R_k,\quad R_k\in S^{k-2},
\]
there is $C>0$ such that
\[
\begin{aligned}
\sum_{\al}\|U_{\al}\|^2=\sum_{\al}\big\{(\chi_{\al}D_t^2u,\chi_{\al}D_t^2u)+(\lr{D}\chi_{\al}D_tu,\lr{D}\chi_{\al}D_tu)\\
+(\lr{D}^2\chi_{\al}u,\lr{D}^2\chi_{\al}u)\big\}\geq \|U\|^2-C\|\lr{D}^{-1}U\|^2
\end{aligned}
\]
because $\chi D_t^ku=D_t^ku.$ Therefore summing up \eqref{eq:onisi} over $\al$ and choosing $N$ such that  the second term on the left-hand side of \eqref{eq:onisi} absorbs \eqref{eq:ota}, one can find $c> 0$ and $C_1,C_2>0$ such that 
\begin{equation}
\label{eq:karei}
\begin{split}
c\,t^{-N+2}e^{-\gam t}\|U(t)\|^2+c\int_{\epsilon}^t\tau^{-N+1}e^{-\gam \tau}\|U(\tau)\|^2d\tau\\
\leq C_1\Big(\epsilon^{-N-1}e^{-\gam \epsilon}\|U(\epsilon)\|^2+\int_{\epsilon}^t\tau^{-N+1}e^{-\gam \tau}\|f(\tau)\|^2d\tau\Big)\\
+C_2\Big(t^{-N+2}e^{-\gam t}\|\lr{D}^{-1}U(t)\|^2+\int_{\epsilon}^t\tau^{-N+1}e^{-\gam \tau}\|\lr{D}^{-1}U(\tau)\|^2d\tau\Big).
\end{split}
\end{equation}
Let $\chi(\xi)\in C_0^{\infty}(\R^n: [0, 1])$ be a function which is equal to $1$ in $|\xi|\leq 1$ and to $0$ for $|\xi|\geq 2$. Set 
\[
\chi_{\nu}(\xi)=\chi(\nu\xi),
\]
where $\nu>0$ is a small parameter. Then we have the following
\begin{lem}
\label{lem:kusakari}Assume that $U$ satisfies \eqref{eq:redE}. Then there exists a constant $C>0$ such that the function  $U_{\nu}=\chi_{\nu}U$ satisfies the estimate
\begin{equation}
\label{eq:kusa}
\begin{split}
t^{-N+1}e^{-\gam t}\|U_{\nu}(t)\|^2+(\gam-C\nu^{-1})\int_{\epsilon}^t\tau^{-N+1}e^{-\gam\tau}\|U_{\nu}(\tau)\|^2d\tau\\
\leq \epsilon^{-N+1}e^{-\gam \epsilon}\|U(\epsilon)\|^2
+C\nu^2\int_{\epsilon}^t\tau^{-N+1}e^{-\gam\tau}\|U(\tau)\|^2d\tau\\
+C\int_{\epsilon}^t\tau^{-N+1}e^{-\gam\tau}\|f(\tau)\|^2d\tau.
\end{split}
\end{equation}
\end{lem}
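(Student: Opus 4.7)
My plan is to derive \eqref{eq:kusa} by a direct weighted $L^2$-energy estimate on $U_\nu=\chi_\nu(D)U$, exploiting two structural features: that $U_\nu$ is Fourier-supported in $|\xi|\leq 2/\nu$, so $\|\lr{D}U_\nu\|\leq 2\nu^{-1}\|U_\nu\|$, and that the commutators $[\chi_\nu, A]$, $[\chi_\nu, B]$ pick up a factor of $\nu$ from the identity $\dif_{\xi_j}\chi_\nu(\xi)=\nu(\dif_j\chi)(\nu\xi)$. Applying $\chi_\nu(D)$ to \eqref{eq:redE} and using that $\chi_\nu(D)$ commutes with $\lr{D}$, I obtain
\[
D_tU_\nu=(A\lr{D}+B)U_\nu+R_\nu U+\chi_\nu F,\qquad R_\nu:=[\chi_\nu, A]\lr{D}+[\chi_\nu, B],
\]
and then compute $\frac{d}{dt}\|U_\nu\|^2=-2\,{\mathsf{Im}}(D_tU_\nu, U_\nu)$.

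Three contributions need to be controlled. First, from the frequency restriction on $U_\nu$ and $A,B\in S^0$, we have $\|(A\lr{D}+B)U_\nu\|\leq C\nu^{-1}\|U_\nu\|$, hence
\[
|{\mathsf{Im}}((A\lr{D}+B)U_\nu,U_\nu)|\leq C\nu^{-1}\|U_\nu\|^2,
\]
which is the origin of the factor $\gam-C\nu^{-1}$ on the left-hand side of \eqref{eq:kusa}. Second, a careful Weyl-calculus analysis of $R_\nu$ (using the $\nu$-smallness of $\dif_\xi\chi_\nu$ together with the frequency localization $\lr{\xi}\sim \nu^{-1}$ on $\mathrm{supp}\,\dif_\xi\chi_\nu$ to balance against the unbounded factor $\lr{D}$) yields $\|R_\nu U\|\leq C\nu\|U\|$; Cauchy--Schwarz and Young then give
\[
|{\mathsf{Im}}(R_\nu U,U_\nu)|\leq C\nu^2\|U\|^2+\|U_\nu\|^2,
\]
which is the source of the $C\nu^2\|U\|^2$ term on the right. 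Third, the source term is bounded crudely: $|{\mathsf{Im}}(\chi_\nu F,U_\nu)|\leq \|F\|^2+\|U_\nu\|^2=\|f\|^2+\|U_\nu\|^2$, since $F={}^t(f,0,0)$.

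Combining these estimates yields
\[
\frac{d}{dt}\|U_\nu\|^2\leq C\nu^{-1}\|U_\nu\|^2+C\nu^2\|U\|^2+C\|f\|^2.
\]
Multiplying by the weight $\tau^{-N+1}e^{-\gam\tau}$ and using $\frac{d}{d\tau}(\tau^{-N+1}e^{-\gam\tau})\leq -\gam\tau^{-N+1}e^{-\gam\tau}$ for $N\geq 1$ produces a differential inequality for $\tau^{-N+1}e^{-\gam\tau}\|U_\nu\|^2$ with coefficient $-(\gam-C\nu^{-1})$ in front of $\|U_\nu\|^2$. Integrating from $\epsilon$ to $t$ and using $\|U_\nu(\epsilon)\|\leq \|U(\epsilon)\|$ (since $\|\chi_\nu(D)\|_{L^2\to L^2}\leq 1$) yields \eqref{eq:kusa}. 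The main technical obstacle is the commutator bound $\|R_\nu U\|\leq C\nu\|U\|$: the $O(\nu)$ bound on $[\chi_\nu, A]$ itself is immediate from Calder\'on--Vaillancourt (since $\dif_\xi\chi_\nu$ has all $L^\infty$-seminorms of size $\nu$), but composing with the unbounded operator $\lr{D}$ requires careful Weyl-calculus bookkeeping, using that $\dif_\xi\chi_\nu$ is supported where $\lr{\xi}\sim\nu^{-1}$ so that its derivatives in $\xi$ combine with powers of $\lr{\xi}$ to preserve the $\nu$-gain.
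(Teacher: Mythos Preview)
Your proof is correct and follows essentially the same approach as the paper's: derive the evolution equation for $U_\nu$, bound the main term $(A\lr{D}+B)U_\nu$ by $C\nu^{-1}\|U_\nu\|$ via the frequency localization of $U_\nu$ (the paper does this by introducing $A_\nu=(A\lr{D}+B)\chi_{\nu/2}$ and noting $\nu A_\nu\in S^0$ uniformly, which amounts to the same thing since $\chi_{\nu/2}U_\nu=U_\nu$), bound the commutator $R_\nu=[\chi_\nu,A\lr{D}+B]$ by $C\nu$ via the Weyl calculus, and integrate the resulting weighted differential inequality. The key commutator estimate $\|R_\nu U\|\leq C\nu\|U\|$ and the justification you sketch for it---$\nu$-smallness of $\partial_\xi\chi_\nu$ balanced against $\lr{\xi}\sim\nu^{-1}$ on its support---are exactly what the paper invokes.
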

\begin{proof}Note that one has 
\begin{equation}
\label{eq:petk}
D_tU_{\nu}=A_{\nu}U_{\nu}+\chi_{\nu}F+R_{\nu}U
\end{equation}
with $A_{\nu}=(A\lr{D}+B)\chi_{\nu/2}$ and $R_{\nu}=[\chi_{\nu},A\lr{D}+B]$, where we have used the equality $\chi_{\nu/2}\chi_{\nu}=\chi_{\nu}$. It is clear that $A_{\nu}, R_{\nu}\in S^0$ and moreover $\nu A_{\nu}, \nu^{-1}R_{\nu}\in S^0$ uniformly in $\nu>0$ because $|\dif_{\xi}^{\al}\chi_{\nu}(\xi)|\leq C_{\al}(\nu^{-1}+|\xi|)^{-|\al|}$ and $\lr{\xi}\leq C\nu^{-1}$ on the support of $\chi_{\nu/2}$. This shows that there is $C>0$ independent of $\nu$ such that
\begin{equation}
\label{eq:ikemura}
\|A_{\nu}U\|\leq C\nu^{-1}\|U\|,\qquad \|R_{\nu}U\|\leq C\nu\|U\|.
\end{equation}
From \eqref{eq:petk} we have
\[
\begin{aligned}
\dif_t\big(t^{-N+1}e^{-\gam t}\|U_{\nu}\|^2\big)+(N-1)t^{-N}e^{-\gam t}\|U_{\nu}\|^2
+\gam t^{-N+1}e^{-\gam t}\|U_{\nu}\|^2\\=-2t^{-N+1}e^{-\gam t}{\mathsf{Im}}\,(A_{\nu}U_{\nu}+\chi_{\nu}F+R_{\nu}U,U_{\nu})\\
\leq Ct^{-N+1}e^{-\gam t}\big(\nu^{-1}\|U_{\nu}\|^2+\|f\|^2+\nu^2\|U\|^2\big)
\end{aligned}
\]
by using \eqref{eq:ikemura}. Integrating this inequality, we obtain the assertion.
\end{proof}
Since $\lr{\xi}^{-1}(1-\chi_{\nu})\leq \nu$ writing $\lr{D}^{-1}U=\lr{D}^{-1}\chi_{\nu}U+\lr{D}^{-1}(1-\chi_{\nu})U$, it is clear that
\[
\|\lr{D}^{-1}U\|\leq \|U_{\nu}\|+\nu\|U\|.
\]
Thus the second term on the right-hand side of \eqref{eq:karei} is bounded by
\[
C_2t^{-N+2}e^{-\gam t}\big(\nu^2\|U(t)\|^2+\|U_{\nu}(t)\|^2\big)+C_2\int_{\epsilon}^t\tau^{-N+1}e^{-\gam\tau}\big(\nu^2\|U(\tau)\|^2+\|U_{\nu}(\tau)\|^2\big)d\tau.
\]
Thus choosing $\nu>0$ small and $\gamma$ such that $\gam\geq C\nu^{-1}+1$, from   \eqref{eq:kusa} and \eqref{eq:karei} we obtain the following
\begin{thm}
\label{pro:energy}Let $U \in C^{\infty}(\R_t: C_0^{\infty}(\R^{n})).$ Assume that for every point $(x_0,\xi_0) \in T^*(U) \setminus \{0\}$ with $\alpha(x_0, \xi_0) = 0$ there exist a conic neighborhood $W \subset T^*(U) \setminus \{0\}$ and $T(x_0, \xi_0) > 0$ such that the condition (E) is satisfied for $0 \leq t \leq T(x_0, \xi_0)$ and $(x, \xi) \in W$. Then there exist $c>0,\: T_0 > 0,\: C>0$ and $N \in \N$ such that for $0 < \epsilon < t \leq T_0$ we have
\begin{equation} 
\label{eq:4.9}
\begin{split}
c\,t^{-N+2}e^{-\gam t}\|U(t)\|^2+c\int_{\epsilon}^t\tau^{-N+1}e^{-\gam \tau}\|U(\tau)\|^2d\tau \\
\leq C\epsilon^{-N-1}e^{-\gam \epsilon}\|U(\epsilon)\|^2+C\int_{\epsilon}^t\tau^{-N+1}e^{-\gam \tau}\|f(\tau)\|^2d\tau.
\end{split}
\end{equation}
\end{thm}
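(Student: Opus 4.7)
The plan is to deduce the theorem from the already-established global energy estimate of Proposition \ref{pro:senken} by combining a microlocal partition of unity with the low-frequency estimate of Lemma \ref{lem:kusakari}.

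First I would fix $u\in C^\infty(\R_t;C_0^\infty(\R^n))$ with spatial support in $\{|x|\leq r_1\}$ and cover the (relatively compact) intersection of $\{\alpha=0\}$ with the unit cosphere bundle by finitely many of the conic neighborhoods $W$ on which $(E)$ holds, supplementing with regions where $\alpha>0$. I would then choose a homogeneous partition of unity $\{\chi_\al(x,\xi)\}\subset S^0$ subordinate to this cover with $\sum_\al \chi_\al^2=\chi^2$, where $\chi(x)\equiv 1$ on $\mathrm{supp}_x\,u$ and vanishes for $|x|\geq r_2$. For each $\al$, Lemma \ref{lem:kakutyo} lets me extend the coefficients $\alpha,\tilde a,b$ outside the conic neighborhood where $\chi_\al$ lives so that condition $(E)$, hence \eqref{eq:jyokenE}, holds globally in $(x,\xi)$; the resulting modified operator still agrees with $P$ on $\mathrm{supp}\,\chi_\al$.

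Next, setting $U_\al={}^t(D_t^2\chi_\al u,D_t\lr{D}\chi_\al u,\lr{D}^2\chi_\al u)$, the commutator $P\chi_\al u-\chi_\al Pu=\sum_{j=1}^3 c_{j,\al}U_j$ with $c_{j,\al}\in S^0$ expresses $U_\al$ as a solution of the first-order system \eqref{eq:shino}. Applying Proposition \ref{pro:senken} to each $U_\al$ and summing over $\al$, using that $\sum_\al\chi_\al\#\lr{\xi}^k\#\chi_\al=\chi\#\lr{\xi}^k\#\chi$ modulo $S^{k-2}$ to conclude $\sum_\al\|U_\al\|^2\geq\|U\|^2-C\|\lr{D}^{-1}U\|^2$, I obtain the weighted inequality \eqref{eq:karei}, with $N$ chosen large enough that the $\|U(\tau)\|^2$ integral on the left absorbs the commutator contribution $C\int\|U(\tau)\|^2 d\tau$ coming from the forcing estimate \eqref{eq:ota}.

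The remaining obstacle, which I expect to be the only real difficulty after all the work of Section 3, is the pair of $\|\lr{D}^{-1}U\|^2$ terms on the right-hand side of \eqref{eq:karei}; these are the price paid for the Fefferman-Phong-type lower bound of Lemma \ref{lem:kagi}, and a naive estimate $\|\lr{D}^{-1}U\|\leq\|U\|$ produces a constant that cannot be absorbed by the $\|U\|^2$ term on the left. To fix this, I would introduce the low-frequency cutoff $U_\nu=\chi_\nu(D)U$, for which Lemma \ref{lem:kusakari} provides an auxiliary estimate without the $t^{-1}$ weight singularity, at the cost of the $\nu$-dependent constants $C\nu^{-1}$ and $C\nu^2\|U\|^2$. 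Decomposing $\lr{D}^{-1}U=\lr{D}^{-1}U_\nu+\lr{D}^{-1}(1-\chi_\nu)U$ and using $\lr{\xi}^{-1}(1-\chi_\nu(\xi))\leq\nu$ to bound the second piece by $\nu\|U\|$, the troublesome terms split into a part of order $\nu^2\|U\|^2$ that for $\nu$ sufficiently small is absorbed into the left of \eqref{eq:karei}, and a part $\|U_\nu\|^2$ that is controlled by \eqref{eq:kusa}. Finally choosing $\gamma$ so large that $\gamma\geq C\nu^{-1}+1$ dominates the $\nu$-dependent constants produced by Lemma \ref{lem:kusakari}, the latter contribution is absorbed as well, yielding the claimed microlocal estimate \eqref{eq:4.9}.
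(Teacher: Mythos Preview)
Your proposal is correct and follows essentially the same approach as the paper: microlocalize via the partition of unity $\{\chi_\al\}$, apply Lemma \ref{lem:kakutyo} and Proposition \ref{pro:senken} on each piece to reach \eqref{eq:karei}, then split $\lr{D}^{-1}U$ using the low-frequency cutoff $\chi_\nu$ and absorb the remaining terms via Lemma \ref{lem:kusakari} with $\nu$ small and $\gam\geq C\nu^{-1}+1$. The sequence of reductions and the choice of parameters match the paper's argument step for step.
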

We may absorb the weight $\tau^{-N}$ passing to energy estimates with a loss of derivatives. Denote ${\mathcal P}=D_t-A\lr{D}-B$ and consider ${\mathcal P}U=F$ for $U \in C^{\infty}(\R_t: C_0^{\infty}(\R^n))$.
Differentiating ${\mathcal P}U = F$ with respect to $t$, we can find the functions
$D _t^j U(0, x) = U_j(x) \in C_0^{\infty}(\R^n)$. Next we set
$$
U_M(t,x) = \sum_{j=0}^M \frac{1}{j!} U_j(x)(i t)^j.
$$
Then $W = U - U_M \in C^{\infty}(\R_t: C_0^{\infty}(\R^n))$ satisfies ${\mathcal P}W = F_M$, where
$$
D_t^j F_M(0, x) = 0,\: j = 0,1,\ldots, M-1,\quad D_t^j W(0, x) = 0,\:j = 0,1,\ldots,M.
$$
Note that
\[
\|U_M(t,\cdot)\|\leq C_M\big(\|U(0)\|_{H^M(\R^n)}+\sum_{j=0}^{M-1}\|D_t^jF(t,\cdot)\|_{H^{M-1-j}(\R^n)}\big)
\]
and for $2M\geq N$
\[
 \int_{\ep}^te^{-\gamma \tau}\tau^{-N +1} \|F_M(\tau, \cdot)\|^2 d\tau \leq C_{N,M} \int_0^te^{-\gamma \tau}\|D_t^MF(\tau, \cdot)\|^2 d\tau. 
\]
Choosing $2M \geq  N+1$,  we apply Theorem \ref{pro:energy} for $W$. Since $\epsilon^{-N-1}\|W(\epsilon)\|^2\to 0$ as $\epsilon\to 0$,  we obtain 
\begin{cor}
\label{cor:hyoka}
 Under the assumptions of Theorem {\rm 4.1} there exist $N\in\N$,   $T>0$, $C>0$ such that for $0 < t \leq T$ we have the estimate
\begin{equation} \label{eq:4.9}
\int_0^t e^{-\gam \tau}\|U(\tau, \cdot)\|^2d\tau
\leq C\Big(\|U(0,\cdot)\|_{H^N(\R^n)}+\sum_{j=0}^N\int_0^t e^{-\gam \tau}\|D_t^jf(\tau,\cdot)\|_{H^{N-j}(\R^n)}^2d\tau\Big)
\end{equation}
for $U \in C^{\infty}(\R_t:\:C_0^{\infty}(\R^n))$ and  $\gam \geq \gam_0$.
\end{cor}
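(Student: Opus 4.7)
The plan is to pass from the weighted estimate of Theorem \ref{pro:energy} to a clean $L^2$-in-time bound by removing the singular factor $\tau^{-N+1}$ on the left and the initial-layer factor $\epsilon^{-N-1}$ on the right. Following the strategy already indicated in the excerpt, one subtracts from $U$ a Taylor polynomial at $t=0$ of sufficiently high degree. Writing the first-order reduction ${\mathcal P} U = F$, iterated differentiation of ${\mathcal P}U=F$ at $t = 0$ expresses $U_j(x) := D_t^j U(0, x) \in C_0^\infty(\R^n)$ as a fixed polynomial combination of spatial derivatives of $U(0, \cdot)$ of order $\leq j$ and of $D_t^k F(0, \cdot)$ for $k < j$. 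Setting
\[
U_M(t, x) = \sum_{j=0}^M \frac{(it)^j}{j!} U_j(x), \qquad W = U - U_M,
\]
one has $D_t^j W(0, \cdot) = 0$ for $j \leq M$, ${\mathcal P}W = F_M$, and $D_t^j F_M(0, \cdot) = 0$ for $j \leq M-1$.

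Next I apply Theorem \ref{pro:energy} to $W$ with source $F_M$ and choose $M$ so that $2M \geq N+1$. Because $\|W(\epsilon)\|^2 = O(\epsilon^{2M+2})$, the boundary contribution $\epsilon^{-N-1} e^{-\gamma \epsilon} \|W(\epsilon)\|^2$ vanishes as $\epsilon \to 0$, eliminating the singular initial term. Simultaneously, the Taylor formula with integral remainder $F_M(\tau, x) = \frac{1}{(M-1)!} \int_0^\tau (\tau - s)^{M-1} D_s^M F_M(s, x)\, ds$ together with Cauchy--Schwarz produces
\[
\int_0^t \tau^{-N+1} e^{-\gamma \tau} \|F_M(\tau, \cdot)\|^2 d\tau \leq C_{N,M} \int_0^t e^{-\gamma \tau} \|D_t^M F(\tau, \cdot)\|^2 d\tau + C'_{N,M} \|U(0, \cdot)\|_{H^M(\R^n)}^2,
\]
provided $2M \geq N$, where the initial-data term absorbs the polynomial part of $F_M$ arising from the subtraction.

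Finally, on $(0, T]$ one has $\tau^{-N+1} \geq T^{-(N-1)}$, so the weight on the left may be dropped to produce a bound for $\int_0^t e^{-\gamma \tau} \|W(\tau, \cdot)\|^2 d\tau$. Reconstituting $U = W + U_M$ via $\|U\|^2 \leq 2\|W\|^2 + 2\|U_M\|^2$ and inserting the pointwise estimate
\[
\|U_M(t, \cdot)\| \leq C_M \Big( \|U(0, \cdot)\|_{H^M(\R^n)} + \sum_{j=0}^{M-1} \|D_t^j F(t, \cdot)\|_{H^{M-1-j}(\R^n)} \Big)
\]
recorded in the excerpt yields the target inequality with some sufficiently large $N'$ in place of $N$. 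The main technical obstacle is the Sobolev bookkeeping: tracking how many spatial derivatives of $U(0, \cdot)$ and of $D_t^k F$ enter the formulas for $U_j$ and for the Taylor remainder of $F_M$, which dictates the exact pairing of orders $(N'-j)$ appearing on the right-hand side. Once the threshold $2M \geq N+1$ is enforced, this verification is routine.
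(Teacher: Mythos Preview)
Your proposal is correct and follows essentially the same approach as the paper: subtract the Taylor polynomial $U_M$ of order $M$ with $2M\geq N+1$, apply Theorem~\ref{pro:energy} to $W=U-U_M$ so that the boundary term $\epsilon^{-N-1}\|W(\epsilon)\|^2$ vanishes as $\epsilon\to 0$, absorb the weight $\tau^{-N+1}$ on the $F_M$ side via the vanishing of $F_M$ to order $M$, and then reconstitute $U$ from $W$ and $U_M$. Your write-up is in fact more explicit than the paper's sketch about dropping the weight on the left and tracking the extra initial-data contribution in the bound for $\int\tau^{-N+1}e^{-\gamma\tau}\|F_M\|^2\,d\tau$.
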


By the same argument we can obtain an estimate for the adjoint operator $P^*$ which has the same principal symbol as $P$. These estimates imply by a standard method the well-posedness of the Cauchy problem for $P$ and we get the following

\begin{cor} Under the assumptions of Theorem $4.1$  the operator $P$ is strongly hyperbolic.

\end{cor}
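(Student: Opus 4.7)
The plan is to deduce strong hyperbolicity as a routine consequence of the energy estimate in Corollary \ref{cor:hyoka}, applied both to $P$ and to its formal adjoint $P^{*}$. Since $P^{*}$ has the same principal symbol as $P$, the hypothesis (E) is preserved under passage to the adjoint, and the whole machinery of Sections 3 and 4 yields an analogous estimate for $P^{*}$ with time reversed.

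For uniqueness, applying Corollary \ref{cor:hyoka} to $P$ with $f = 0$ and vanishing Cauchy data on $\{t = 0\}$ gives $\|U(t)\|_{L^{2}} = 0$ on $[0,T]$, hence $u \equiv 0$. A standard finite-propagation-speed argument for hyperbolic operators then localizes this identity to the backward cone, yielding the uniqueness condition (ii) of Definition 1.1.

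For existence, given $f \in C_{0}^{\infty}(U_{T}^{+})$, the estimate for $P^{*}$ applied backwards in time furnishes, for test functions $v$ supported away from $t = T$, a bound of the form $|(f, v)_{L^{2}}| \leq C\,\mathcal{N}(P^{*}v)$ with an appropriate Sobolev seminorm $\mathcal{N}$. The Hahn--Banach theorem then produces a weak solution $u$ of $Pu = f$ vanishing for $t \leq 0$. The estimate applied to $P$ with right-hand sides $D_{t}^{k} f$ up to the order dictated by the loss $N$ promotes $u$ to the regularity class $C^{m}$ required by Definition 1.1(i).

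Finally, strong hyperbolicity demands that this well-posedness persist under addition of an arbitrary operator $Q_{m-1}$ of order at most $2$. In the first-order system reformulation \eqref{eq:redE}, such a perturbation contributes only an additional zeroth-order matrix term to $B$; the principal symbol, the symmetrizer $S(t)$ and all the bounds of Section 3 are intact, and the constants and the integer $N$ in the energy estimates need only be enlarged to depend on finitely many seminorms of $Q_{m-1}$. I expect the main (mild) obstacle to be the bookkeeping underlying Corollary \ref{cor:hyoka}: the singular factor $\epsilon^{-N-1}\|U(\epsilon)\|^{2}$ must be absorbed into Sobolev norms of the Cauchy data, which is accomplished by the formal-power-series subtraction $U - U_{M}$ used in the derivation of Corollary \ref{cor:hyoka}, and whose precision has to be re-examined for each lower-order perturbation $Q_{m-1}$.
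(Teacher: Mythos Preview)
Your proposal is correct and matches the paper's primary argument, which the paper states in a single sentence before the corollary: derive the analogous estimate for $P^{*}$ (same principal symbol) and invoke the standard duality method for well-posedness, with lower-order terms affecting only $B$ and hence only the constants and $N$. The paper also sketches, immediately after the corollary, an alternative proof by regularization---replacing $\alpha$ by $\alpha+\epsilon$ makes $P_{\epsilon}$ strictly hyperbolic for $t\geq 0$, and since condition (E) and Lemma~\ref{lem:sou} hold uniformly in $\epsilon$, Corollary~\ref{cor:hyoka} yields a priori bounds uniform in $\epsilon$, allowing passage to the limit along a subsequence of the solutions $u^{\epsilon}$.
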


We can also obtain the result by another way. Consider the operator $P_{\epsilon}$  obtained from $P$ replacing $\alpha$ by $\alpha+\epsilon$ with $\epsilon>0$. Then it is clear that $P_{\epsilon}$ is strictly hyperbolic in $t\geq 0$. On the other hand, as it was  seen in the proof of Lemma \ref{lem:kakutyo}, the condition \eqref{eq:2.10} holds with some $\delta>0$ independent of $\epsilon>0$. It is also easy to check that Lemma \ref{lem:sou} holds uniformly in $\epsilon>0$. Therefore thanks to Corollary \ref{cor:hyoka} we obtain a priori estimates which is uniform in  $\epsilon>0$ for the solution $u^{\epsilon}$ to the Cauchy problem  
\[
P_{\epsilon}u^{\epsilon}=f,\quad \partial_t^ju^{\epsilon}(0,x)=u_j(x),\quad j=0,1,2.
\]
Thus there is a subsequence of $\{u^{\epsilon}\}$ convergent to the solution to the Cauchy problem for $P$.


\renewcommand{\thelem}{A.\arabic{lem}}
 \renewcommand{\theequation}{A.\arabic{equation}}
  \setcounter{equation}{0}  
  \section*{Appendix}  
  
In this appendix we give a proof of \eqref{eq:sGar}. Assume that  $p(x,\xi)$ is a $m\times m$ matrix with entries in $ S^0_{1,0}(\R^n\times \R^n)$  such that $p\gg 0$. To define $p_F$ we follow \cite{Fr}. Choose $0\leq q(\sigma)\in C_0^{\infty}(\R^n)$ such that
\[
q(-\sigma)=q(\sigma),\quad \int q^2(\sigma)d\sigma=1.
\]
Let us put
\[
F(\xi,\zeta)=q((\zeta-\xi)\lr{\xi}^{-1/2})\lr{\xi}^{-n/4}.
\]
We first check that one can write
\begin{equation}
\label{eq:a_1}
\begin{split}
\dif_{\xi}^{\be}F(\xi,\zeta)=\lr{\xi}^{-n/4}\sum_{|\gamma|\leq |\be|,\gamma_1\leq \gamma}((\zeta-\xi)\lr{\xi}^{-1/2})^{\gamma_1}\\
\times (\dif_{\sigma}^{\gamma}q)((\zeta-\xi)\lr{\xi}^{-1/2})\psi_{\be,\gamma,\gamma_1},
\end{split}
\end{equation}
where $\psi_{\be,\gamma,\gamma_1}\in S^{-(|\be|-|\gamma-\gamma_1|/2)}_{1,0}$. Indeed noting that 
\[
\dif_{\xi}^{\al}\lr{\xi}^s=\lr{\xi}^s\psi,\quad \psi\in S^{-|\al|}_{1,0},
\]
the assertion follows from the condition on $|\be|$. Let us put
\[
p_F(\eta,y,\xi)=\int F(\eta,\zeta)p(y,\zeta)F(\xi,\zeta)d\zeta
\]
which is called the Friedrichs part of $p$ and define the operator $p_F$ by
\[
(p_Fu)(x)=(2\pi)^{-2n}\int e^{i(x-x')\xi+i x'\xi'}p_F(\xi,x',\xi'){\hat u}(\xi')d\xi'dx'd\xi.
\]
We first remark that
\begin{eqnarray*}
(p_Fu,v)=\int p(x',\zeta)\Big(\int e^{ix'\xi'}F(\xi',\zeta){\hat u}(\xi')d\xi'\Big)\overline{\Big(\int e^{ix'\xi}F(\xi,\zeta){\hat v}(\xi)d\xi\Big)}dx'd\zeta
\end{eqnarray*}
which implies
\begin{equation}
\label{eq:a_4}
(p_Fu,u)=\int p(x',\zeta)\Big|\int e^{ix'\xi'}F(\xi',\zeta){\hat u}(\xi')d\xi'\Big|^2dx'd\zeta\geq 0
\end{equation}
since $p(x',\zeta)\gg 0$.
We next check that
\begin{equation}
\label{eq:a_2}
p_Fu=q^wu,\;\;q(x,\xi)=(2\pi)^{-n}\int e^{-iy\eta}p_F\Bigl(\xi +\frac{\eta}{2},x+y,\xi-\frac{\eta}{2}\Bigr)dyd\eta.
\end{equation}
In fact we see that
\begin{eqnarray*}
p_Fu=(2\pi)^{-n}\int e^{ix\xi'}{\hat u}(\xi')d\xi'\Big((2\pi)^{-n}\int e^{i(x-x')\xi-ix\xi'+ix'\xi'}p_F(\xi,x',\xi')dx'd\xi\Big)\\=(2\pi)^{-n}\int e^{ix\xi}\Big((2\pi)^{-n}\int e^{-ix'\xi'}p_F(\xi+\xi',x+x',\xi)d\xi'dx'\Big){\hat u}(\xi)d\xi={\tilde p}(x,D)u,
\end{eqnarray*}
where
\begin{equation}
\label{eq:a_3}
{\tilde p}(x,\xi)=(2\pi)^{-n}\int e^{-iy\eta}p_F(\xi+\eta,x+y,\xi)d\eta dy.
\end{equation}
Since we have ${\tilde p}(x,D)=q^w(x,D)$ with
\[
q(x,\xi)=(2\pi)^{-n}\int e^{iz\zeta}{\tilde p}\Bigl(x+\frac{z}{\sqrt{2}},\xi+\frac{\zeta}{\sqrt{2}}\Bigr)dzd\zeta,
\]
inserting (\ref{eq:a_3}) into the above formula, we get the desired assertion.

\begin{lem}
\label{lem:a_1}
We have
\[
q(x,\xi)=p(x,\xi)+\sum_{2\leq |\al+\be|\leq 3}\Psi_{\al,\be}(\xi)p^{(\al)}_{(\be)}(x,\xi)+r(x,\xi),\quad r\in S^{-2}_{1/2,0},
\]
where $\Psi_{\al,\be}\in S^{(|\al|-|\be|)/2}_{1,0}$.
\end{lem}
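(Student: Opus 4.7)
The plan is to insert the definition of $p_F$ into the Weyl-symbol formula (A.2), exchange the order of integration, and expand in Taylor series. Substituting gives
\[
q(x,\xi)=(2\pi)^{-n}\int e^{-iy\eta}\!\int F(\xi+\tfrac{\eta}{2},\zeta)\,p(x+y,\zeta)\,F(\xi-\tfrac{\eta}{2},\zeta)\,d\zeta\,dy\,d\eta.
\]
I Taylor-expand $F(\xi\pm\eta/2,\zeta)$ in $\eta$ around $0$ using (A.1), and $p(x+y,\zeta)$ jointly in $y$ and $\zeta-\xi$ around $(0,0)$, truncating at total order $|\mu|+|\nu|=4$. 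The $(y,\eta)$-Fourier pairing
\[
(2\pi)^{-n}\!\int e^{-iy\eta}y^{\mu}\eta^{\alpha_1+\alpha_2}\,dy\,d\eta=(-i)^{|\mu|}\mu!\,\delta_{\mu,\,\alpha_1+\alpha_2}
\]
retains only matched multi-indices, producing
\[
q(x,\xi)=\sum_{\mu,\nu,\,\alpha_1+\alpha_2=\mu}c_{\mu,\nu,\alpha_1,\alpha_2}\,(\dif_x^{\mu}\dif_\xi^{\nu}p)(x,\xi)\,I^\nu_{\alpha_1,\alpha_2}(\xi)+r(x,\xi),
\]
where $I^\nu_{\alpha_1,\alpha_2}(\xi)=\int(\dif_\xi^{\alpha_1}F)(\xi,\zeta)(\dif_\xi^{\alpha_2}F)(\xi,\zeta)(\zeta-\xi)^\nu\,d\zeta$. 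The substitution $\zeta=\xi+\sigma\lr{\xi}^{1/2}$ combined with (A.1) writes $I^\nu_{\alpha_1,\alpha_2}$ as a universal $\sigma$-integral (a polynomial in $\sigma$ times a product of derivatives of the cutoff $q$) multiplied by a symbol in $S^{(|\nu|-|\mu|)/2}_{1,0}$. This identifies each $\Psi_{\alpha,\beta}$ (with $\alpha\leftrightarrow\nu$, $\beta\leftrightarrow\mu$) in the claimed class $S^{(|\alpha|-|\beta|)/2}_{1,0}$.

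Next I isolate the leading term and prove that order-one terms vanish. For $\mu=\nu=0$, $I^0_{0,0}(\xi)=\int F(\xi,\zeta)^2\,d\zeta=\int q^2(\sigma)\,d\sigma=1$, which reproduces $p(x,\xi)$. For $|\nu|=1,\mu=0$, the relevant $\sigma$-integral reduces to $\int q^2(\sigma)\sigma^\nu\,d\sigma$ and vanishes by evenness of $q$. For $|\mu|=1,\nu=0$, the two splittings $(\alpha_1,\alpha_2)\in\{(\mu,0),(0,\mu)\}$ appear with coefficients proportional to $(1/2)^{|\mu|}$ and $(-1/2)^{|\mu|}$; by the swap symmetry $I^0_{\alpha_1,\alpha_2}=I^0_{\alpha_2,\alpha_1}$ of the integrand, they cancel. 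The same swap symmetry, combined with the $(\pm 1/2)^{|\alpha_i|}$ coefficients, further kills all contributions with $|\mu|$ odd; the surviving nonzero terms at $2\leq|\mu|+|\nu|\leq 3$ are exactly the explicit sum in the statement (with some entries identically zero), and those at $|\mu|+|\nu|\geq 4$ enter the remainder $r$.

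The main obstacle is the remainder estimate $r\in S^{-2}_{1/2,0}$. The Taylor remainders from truncating $F(\xi\pm\eta/2,\zeta)$ in $\eta$ and $p(x+y,\zeta)$ in $(y,\zeta-\xi)$ are not absolutely integrable: one splits the $\eta$-range into $|\eta|\lesssim\lr{\xi}$, where Taylor expansion of $F$ in its first argument is valid with controlled remainder, and $|\eta|\gtrsim\lr{\xi}$, where repeated integrations by parts against $e^{-iy\eta}$ (using smoothness of $p$ in $y$ and of $F\cdot F$ in $\zeta$) produce arbitrary gain. The key bookkeeping is the scale $\lr{\xi}^{1/2}$ introduced by $\zeta=\xi+\sigma\lr{\xi}^{1/2}$: each power of $(\zeta-\xi)$ contributes $\lr{\xi}^{1/2}$ while each $\xi$-derivative of $F$ supplies $\lr{\xi}^{-1/2}$, so after multiplying by $\dif^\nu_\xi p\in S^{-|\nu|}$ the $(|\mu|,|\nu|)$-term has net order $-(|\mu|+|\nu|)/2$. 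Truncation at $|\mu|+|\nu|=4$ therefore gives the desired $S^{-2}$, and the $\lr{\xi}^{1/2}$ scale forces the remainder into $S^{-2}_{1/2,0}$ rather than $S^{-2}_{1,0}$. Verifying this estimate with full control over all derivatives is the technical heart of the argument.
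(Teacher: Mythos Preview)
Your approach is essentially the paper's, just organized differently. Both insert the definition of $p_F$, Taylor expand, and use the parity/symmetry of $q$ to kill the order-one terms. The organizational difference is that the paper Taylor-expands $p_F(\xi+\eta/2,x+y,\xi-\eta/2)$ in the $y$-variable first, \emph{keeping $p_F$ abstract}, converts the factors $(iy)^\be$ to $\eta$-derivatives by integration by parts, and only afterward substitutes the definition of $p_F$ inside each resulting $p_{F(\be)}^{(\be_1,\be_2)}(\xi,x,\xi)$ and expands in $\sigma=(\zeta-\xi)\lr{\xi}^{-1/2}$. You instead substitute first and Taylor-expand $F(\xi\pm\eta/2,\zeta)$ in $\eta$ simultaneously with $p(x+y,\zeta)$ in $(y,\zeta-\xi)$; your Fourier pairing then plays the same role as the paper's integration by parts, and your $I^{\nu}_{\al_1,\al_2}$ reproduce the paper's $\Psi_{\al,\be}$.

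The payoff of the paper's ordering shows up in the remainder estimate. Because the $y$-Taylor remainder $R$ still contains the intact kernel $p_F$, the paper can invoke a separately proved bound (Lemma~A.2: $|p_{F(\be)}^{(\gam,\delta)}(\eta,x,\xi)|\leq C\lr{\eta}^{-|\gam|/2}\lr{\xi}^{-|\delta|/2}$) and then perform a single integration by parts with weights $\lr{D_\eta}^{n+1}\lr{D_y}^{p}$ to place $R$ in $S^{-2}_{1/2,0}$; no splitting of the $\eta$-range is needed. In your scheme the remainders from expanding $F$ in $\eta$ and $p$ in $(y,\zeta-\xi)$ are entangled, which pushes you toward the dyadic split $|\eta|\lessgtr\lr{\xi}$ you describe. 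That can be made to work, but it is heavier, and as you acknowledge it is where the real labor lies; the paper's route via Lemma~A.2 is the cleaner way to close this part. Your extra observation that all contributions with $|\mu|$ odd vanish by the swap symmetry $I^{\nu}_{\al_1,\al_2}=I^{\nu}_{\al_2,\al_1}$ together with the $(\pm 1/2)^{|\al_i|}$ signs is correct and slightly sharper than what the paper records (it only checks $|\al+\be|=1$ explicitly).
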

From (\ref{eq:a_2}) one can write
\begin{eqnarray*}
q(x,\xi)=(2\pi)^{-n}\int e^{-iy\eta}(iy)^{\be}\Big\{\sum_{|\be|<4}\frac{1}{\be!}p_{F(\be)}(\xi+\eta/2,x,\xi-\eta/2)\\
+4\sum_{|\be|=4}\frac{1}{\be!}\int_0^1 (1-\theta)^3p_{F(\be)}(\xi+\eta/2,x+\theta y,\xi-\eta/2)d\theta\Big\}dyd\eta\\
=\sum_{\be_1+\be_2=\be, |\be|<4}\frac{(-1)^{|\be_2|}}{2^{|\be|}\be_1!\be_2!}p_{F(\be)}^{(\be_1,\be_2)}(\xi,x,\xi)\\
+4(2\pi)^{-n}\sum_{\be_1+\be_2=\be,|\be|=4}\frac{(-1)^{|\be_2|}}{2^{|\be|}\be_1!\be_2!}\int \int_0^1 e^{-iy\eta}(1-\theta)^3\\
\times p_{F(\be)}^{(\be_1,\be_2)}(\xi+\eta/2,x+\theta y,\xi-\eta/2)d\theta dy d\eta\\
=\sum_{\be_1+\be_2=\be, |\be|<4}\frac{(-1)^{|\be_2|}}{2^{|\be|}\be_1!\be_2!}p_{F(\be)}^{(\be_1,\be_2)}(\xi,x,\xi)+R(x,\xi),
\end{eqnarray*}
where we have used the notation
\[
p_{F(\al)}^{(\be,\gamma)}(\eta,y,\xi)=\dif_{\eta}^{\be}\dif_{\xi}^{\gamma}D_x^{\al}p_F(\eta,y,\xi).
\]
Let us consider $P_{F(\be)}^{(\be_1,\be_2)}(\xi,x,\xi)$. Note
\begin{eqnarray*}
p_{F(\be)}^{(\be_1,\be_2)}(\xi,x,\xi)=\int F^{(\be_1)}(\xi,\zeta)p_{(\be)}(x,\zeta)F^{(\be_2)}(\xi,\zeta)d\zeta\\
=\sum \int \lr{\xi}^{-n/2}[(\zeta-\xi)\lr{\xi}^{-1/2}]^{\gamma_1}
\dif_{\sigma}^{\gamma}q((\zeta-\xi)\lr{\xi}^{-1/2})[(\zeta-\xi)\lr{\xi}^{-1/2}]^{\delta_1}\\
\times \dif_{\sigma}^{\delta}q((\zeta-\xi)\lr{\xi}^{-1/2})
\psi_{\be_1,\gamma,\gamma_1}\psi_{\be_2,\delta,\delta_1}p_{(\be)}(x,\zeta)d\zeta\\
=\sum_{|\gamma|\leq|\be_1|, |\delta|\leq |\be_2|}   \psi_{\be_1,\gamma,\gamma_1} \psi_{\be_2,\delta,\delta_1}\int \sigma^{\gamma_1}\dif_{\sigma}^{\gamma}q(\sigma) \sigma^{\delta_1}\dif_{\sigma}^{\delta}q(\sigma) p_{(\be)}(x,\xi+\sigma\lr{\xi}^{1/2})d\sigma\\
=\sum_{|\gamma|\leq|\be_1|, |\delta|\leq |\be_2|}  \psi_{\be_1,\gamma,\gamma_1}\psi_{\be_2,\delta,\delta_1}\int \sigma^{\gamma_1}\dif_{\sigma}^{\gamma}q(\sigma) \sigma^{\delta_1}\dif_{\sigma}^{\delta}q(\sigma)\Big\{
\sum_{|\al|<N}\frac{1}{\al!} p^{(\al)}_{(\be)}(x,\xi)\sigma^{\al}\lr{\xi}^{|\al|/2}\\
+N\sum_{|\al|=N}\frac{\lr{\xi}^{N/2}\sigma^{\al}}{\al!}\int_0^1 (1-\theta)^{N-1}p_{(\be)}^{(\al)}(x,\xi+\theta \sigma \lr{\xi}^{1/2})d\theta\Big\}d\sigma.
\end{eqnarray*}
Setting
\[
\begin{aligned}
\Psi_{\al,\be}=\sum_{\be_1+\be_2=\be}\sum_{ |\gamma|\leq|\be_1|, |\delta|\leq |\be_2|}\frac{(-1)^{|\be_2|}}{2^{|\be|}\be_1!\be_2!}\psi_{\be_1,\gamma,\gamma_1}\psi_{\be_2,\delta,\delta_1}\frac{\lr{\xi}^{|\al|/2}}{\al!}\\
\times\int \sigma^{\al+\gamma_1+\delta_1}\dif_{\sigma}^{\gamma}q(\sigma)\dif_{\sigma}^{\delta}q(\sigma)d\sigma,
\end{aligned}
\]
 it is clear that $\Psi_{\al,\be}\in S^{(|\al|-|\be|)/2}_{1,0}$. Then we have
\[
\begin{aligned}
\sum_{\be_1+\be_2=\be}\frac{(-1)^{|\be_2|}}{2^{|\be|}\be_1!\be_2!}p_{F(\be)}^{(\be_1,\be_2)}(\xi,x,\xi)=\sum_{|\al|<N}\Psi_{\al,\be}(\xi)p^{(\al)}_{(\be)}(x,\xi)\\
+\sum_{\be_1+\be_2=\be}\sum_{ |\gamma|\leq|\be_1|, |\delta|\leq |\be_2|}\frac{(-1)^{|\be_2|}}{2^{|\be|}\be_1!\be_2!}\psi_{\be_1,\gamma,\gamma_1}\psi_{\be_2,\delta,\delta_1}\lr{\xi}^{N/2}\\
\times \sum_{|\al|=N}\frac{1}{\al!}\int \int_0^1\sigma^{\al+\gamma_1+\delta_1}\dif_{\sigma}^{\gamma}q(\sigma)\dif_{\sigma}^{\delta}q(\sigma)\\
 \times (1-\theta)^{N-1}p_{(\be)}^{(\al)}(x,\xi+\theta \sigma \lr{\xi}^{1/2})d\theta d\sigma\\
 =\sum_{|\al|<N}\Psi_{\al,\be}(\xi)p^{(\al)}_{(\be)}(x,\xi)+\Phi_{\be,N},
\end{aligned}
\]
where $\Phi_{\be,N}\in S^{-(N+|\be|)/2}_{1/2,0}$. Hence one has 
\[
\begin{aligned}
q(x,\xi)=\sum_{|\al|<4-|\be|, |\be|<4}\Psi_{\al,\be}(\xi)p^{(\al)}_{(\be)}(x,\xi)+\sum_{|\be|<4}\Phi_{\be,4-|\be|}+R(x,\xi)\\
=\sum_{|\al+\be|<4}\Psi_{\al,\be}(\xi)p^{(\al)}_{(\be)}(x,\xi)+\sum_{|\be|<4}\Phi_{\be,4-|\be|}+R(x,\xi),
\end{aligned}
\]
where $\sum_{|\be|<4}\Phi_{\be,4-|\be|}\in S^{-2}_{1/2,0}$. Then we obtain
\[
\begin{aligned}
&\Psi_{0,0}=\psi_{0,0,0}\psi_{0,0,0}\int q(\sigma)^2d\sigma=1,\\
&\Psi_{\al,0}=\psi_{0,0,0}\psi_{0,0,0}\lr{\xi}^{1/2}\int \sigma^{\al}q(\sigma)^2d\sigma=0 \quad(|\al|=1),\\
&\Psi_{0,\be}=\sum_{\be_1+\be_2=\be}\sum_{ |\gamma|\leq|\be_1|, |\delta|\leq |\be_2|}\frac{(-1)^{|\be_2|}}{2^{|\be|}\be_1!\be_2!}\psi_{\be_1,\gamma,\gamma_1}\psi_{\be_2,\delta,\delta_1}\int \sigma^{\gamma_1+\delta_1}\dif_{\sigma}^{\gamma}q(\sigma)\dif_{\sigma}^{\delta}q(\sigma)d\sigma\\
&\qquad =\frac{1}{2}\sum_{|\gamma|\leq|\be|} \psi_{\be,\gamma,\gamma_1}\int \sigma^{\gamma_1}\dif_{\sigma}^{\gamma}q(\sigma)q(\sigma)d\sigma-\frac{1}{2}\sum_{|\delta|\leq |\be|}\psi_{\be,\delta,\delta_1}\int \sigma^{\delta_1}\dif_{\sigma}^{\delta}q(\sigma)q(\sigma)d\sigma=0\\
&\qquad =0\quad (|\be|=1).
\end{aligned}
\]
Therefore we have 
\[
q(x,\xi)= p(x,\xi)+\sum_{2\leq |\al+\be|\leq 3} \Psi_{\al,\be}p^{(\al)}_{(\be)}(x,\xi)+R(x,\xi)+S^{-2}_{1/2,0}.
\]
It remains to show that $R\in S^{-2}_{1/2,0}$. To do this we prepare the following
\begin{lem}
\label{lem:a_2}
We have
\[
|p_{F(\be)}^{(\gamma,\delta)}(\eta,x,\xi)|\leq C_{\be,\gamma,\delta}\lr{\eta}^{-|\gamma|/2}\lr{\xi}^{-|\delta|/2}.
\]
\end{lem}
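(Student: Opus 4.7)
The strategy is to differentiate the formula $p_F(\eta,x,\xi)=\int F(\eta,\zeta)p(x,\zeta)F(\xi,\zeta)d\zeta$ under the integral sign. Since the integrand depends on $\eta$ only through $F(\eta,\zeta)$ and on $\xi$ only through $F(\xi,\zeta)$, we may write
\[
p^{(\gamma,\delta)}_{F(\be)}(\eta,x,\xi)=\int (\dif_\eta^\gamma F)(\eta,\zeta)\,(D_x^\be p)(x,\zeta)\,(\dif_\xi^\delta F)(\xi,\zeta)\,d\zeta,
\]
and the plan is to estimate each factor via \eqref{eq:a_1} and then use the compact support of $q$ to control the volume of integration.

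The first step is to obtain the pointwise bounds $|\dif_\xi^\delta F(\xi,\zeta)|\leq C\lr{\xi}^{-n/4-|\delta|/2}$ and, by the same formula with $\xi$ replaced by $\eta$, $|\dif_\eta^\gamma F(\eta,\zeta)|\leq C\lr{\eta}^{-n/4-|\gamma|/2}$. This follows from \eqref{eq:a_1} because the products $((\zeta-\xi)\lr{\xi}^{-1/2})^{\gamma_1}(\dif_\sigma^{\gamma'} q)((\zeta-\xi)\lr{\xi}^{-1/2})$ are uniformly bounded thanks to the compact support of each derivative of $q$, and because the constraints $\gamma_1\leq\gamma'$ and $|\gamma'|\leq|\delta|$ force $|\gamma'-\gamma_1|/2\leq |\delta|/2$, so each symbol $\psi_{\delta,\gamma',\gamma_1}$ of order $-|\delta|+|\gamma'-\gamma_1|/2$ is in particular bounded by $C\lr{\xi}^{-|\delta|/2}$.

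The second step exploits the compact support of $q$ to restrict the $\zeta$-integral to the set where $|\zeta-\xi|\leq R\lr{\xi}^{1/2}$ and simultaneously $|\zeta-\eta|\leq R\lr{\eta}^{1/2}$, with $R$ depending on $\supp q$. Via the substitution $\sigma=(\zeta-\xi)\lr{\xi}^{-1/2}$, the first inclusion shows that the effective volume of integration is $O(\lr{\xi}^{n/2})$. The two conditions together yield $|\eta-\xi|\leq R(\lr{\eta}^{1/2}+\lr{\xi}^{1/2})$, which forces the comparability $\lr{\eta}\sim \lr{\xi}$ with constants independent of $\eta,\xi$: for large $|\eta|$ or $|\xi|$ this follows because $\lr{\cdot}^{1/2}=o(\lr{\cdot})$, and for bounded frequencies both sides are trivially comparable.

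Combining these ingredients with the uniform bound $|D_x^\be p(x,\zeta)|\leq C_\be$ (since $p\in S^0$), I get
\[
|p^{(\gamma,\delta)}_{F(\be)}(\eta,x,\xi)|\leq C\lr{\eta}^{-n/4-|\gamma|/2}\,\lr{\xi}^{-n/4-|\delta|/2}\,\lr{\xi}^{n/2},
\]
and the comparability $\lr{\eta}\sim\lr{\xi}$ on the support allows me to absorb the remaining factor $\lr{\eta}^{-n/4}\lr{\xi}^{n/4}$ into a constant, yielding the stated bound. The only nontrivial point in the argument is this comparability step, which is the one place where the quadratic-versus-linear weight structure $\lr{\cdot}^{1/2}\ll\lr{\cdot}$ is actually used; everything else is a direct size estimate built on \eqref{eq:a_1}.
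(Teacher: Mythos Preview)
Your proof is correct, but it follows a different route from the paper's. The paper avoids the comparability argument entirely by applying Cauchy--Schwarz in $\zeta$:
\[
|p_{F(\be)}^{(\gamma,\delta)}(\eta,x,\xi)|\leq \Big(\int |F^{(\gamma)}(\eta,\zeta)p_{(\be)}(x,\zeta)|^2d\zeta\Big)^{1/2}\Big(\int |F^{(\delta)}(\xi,\zeta)|^2d\zeta\Big)^{1/2},
\]
and then estimates each factor separately after the change of variables $\sigma=(\zeta-\eta)\lr{\eta}^{-1/2}$ (resp.\ $\sigma=(\zeta-\xi)\lr{\xi}^{-1/2}$); the Jacobian $\lr{\cdot}^{n/2}$ exactly cancels the $\lr{\cdot}^{-n/2}$ coming from $|F|^2$, and the symbol bounds on $\psi_{\delta,\gamma',\gamma_1}$ give the remaining $\lr{\cdot}^{-|\delta|}$ (before the square root). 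This decouples the $\eta$- and $\xi$-dependence from the outset, so one never needs to know that $\lr{\eta}\sim\lr{\xi}$ on the support of the integrand. Your approach instead keeps the integral intact, uses pointwise bounds on both $F$-factors and the volume bound $O(\lr{\xi}^{n/2})$, and then invokes the support overlap to obtain $\lr{\eta}\sim\lr{\xi}$ and absorb the residual $\lr{\eta}^{-n/4}\lr{\xi}^{n/4}$. Both arguments are short; the paper's has the advantage that it works verbatim even if one replaces $q$ by a function without compact support but with sufficient $L^2$ decay, whereas your comparability step genuinely uses that $\supp q$ is compact.
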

\begin{proof}  By the Cauchy-Schwartz inequality we have
\begin{eqnarray*}
|p_{F(\be)}^{(\gamma,\delta)}(\eta,x,\xi)|\leq \int |F^{(\gamma)}(\eta,\zeta)p_{(\be)}(x,\zeta)F^{(\delta)}(\xi,\zeta)|d\zeta\\
\leq \Big(\int |F^{(\gamma)}(\eta,\zeta)p_{(\be)}(x,\zeta)|^2d\zeta\Big)^{1/2}\Big(\int |F^{(\delta)}(\xi,\zeta)|^2d\zeta\Big)^{1/2}.
\end{eqnarray*}
Changing the integration variables $\sigma=(\zeta-\eta)\lr{\eta}^{-1/2}$ and applying (\ref{eq:a_1}), we get
\begin{eqnarray*}
&\int |F^{(\gamma)}(\eta,\zeta)p_{(\be)}(x,\zeta)|^2d\zeta\\
&\leq \sum \int |\sigma^{\gamma_2}\dif_{\sigma}^{\gamma_1}q(\sigma)\psi_{\gamma,\gamma_1,\gamma_2}p_{(\be)}(x,\eta+\sigma\lr{\eta}^{1/2})|^2d\sigma\\
&\leq \sum C{\gamma,\gamma_i}\int|\lr{\eta}^{-(|\gamma|-|\gamma_1-\gamma_2|/2)}\sigma^{\gamma_2}\dif_{\sigma}^{\gamma_1}q(\sigma)
 p_{(\be)}(x,\eta+\sigma\lr{\eta}^{1/2})|^2d\sigma.
\end{eqnarray*}
Since  $|\gamma|\geq |\gamma_1|\geq|\gamma_1-\gamma_2|$ it is clear that
\[
\lr{\eta}^{-(|\gamma|-|\gamma_1-\gamma_2|/2)}|p_{(\be)}(x,\eta+\sigma\lr{\eta}^{1/2})|\leq C\lr{\eta}^{-|\gamma|/2}
\]
which proves
\[
\Big(\int|F^{(\gamma)}(\eta,\zeta)p_{(\be)}(x,\zeta)|^2d\zeta\Big)^{1/2}\leq C\lr{\eta}^{-|\gamma|/2}.
\]
On the other hand,
\[
\int |F^{(\delta)}(\xi,\zeta)|^2d\zeta\leq \sum \int |\sigma^{\delta_2}\dif_{\sigma}^{\delta_1}q(\sigma)\psi_{\delta,\delta_1,\delta_2}|^2d\sigma\leq C_{\delta}\lr{\xi}^{-|\delta|/2}
\]
 hence the proof is complete.  
\end{proof}

Note that
\begin{eqnarray*}
D_x^{\delta}\dif_{\xi}^{\gamma}R(x,\xi)=\sum_{|\be|=4}\sum_{\gamma'+\gamma''=\gamma}C_{\gamma'}\int (1-\theta)^3e^{-iy\eta}\lr{D_{\eta}}^{n+1}\lr{D_y}^{p}\\
\times\Big[\lr{\eta}^{-p}\lr{y}^{-n-1} p_{F(\be+\delta)}^{(\be_1+\gamma',\be_2+\gamma'')}(\xi+\frac{\eta}{2},x+\theta y,\xi-\frac{\eta}{2})\Big]dyd\eta d\theta.
\end{eqnarray*}
Consequently,  it follows from Lemma A.10 that
\begin{eqnarray*}
|D_x^{\delta}\dif_{\xi}^{\gamma}R(x,\xi)|\leq \sum \sum\sum_{|\mu|\leq p,|\nu|\leq n+1} C\int\int_0^1 |\lr{\eta}^{-p}\lr{y}^{-n-1}\\
\times p_{F(\be+\delta+\mu)}^{(\be_1+\gamma'+\nu',\be_2+\gamma''+\nu'')}(\xi+\frac{\eta}{2},x+\theta y,\xi-\frac{\eta}{2})|dyd\eta d\theta\\
\leq \sum \int \lr{\eta}^{-p}\lr{y}^{-n-1}\lr{\xi+\frac{\eta}{2}}^{-|\be_1+\gamma'+\nu'|/2}\lr{\xi-\frac{\eta}{2}}^{-|\be_2+\gamma''+\nu''|/2}dy d\eta\\
\leq \sum \int  \lr{\eta}^{-p}\lr{y}^{-n-1}\lr{\xi}^{-|\be+\gamma|/2}\lr{\eta}^{|\be+\gamma+\nu|/2}dyd\eta\leq C\lr{\xi}^{-2-|\gamma|/2}
\end{eqnarray*}
which shows $R\in S^{-2}_{1/2,0}$ and hence the assertion.  \qed

Finally, we get the following
\begin{lem}
We have
\[
{\rm Op}^w(p)=p_F-{\rm Op}^w\Big(\sum_{2\leq |\al+\be|\leq 3}\Psi_{\al,\be}(\xi)p^{(\al)}_{(\be)}(x,\xi)\Big)+{\rm Op}^w(r),
\]
where $\Psi_{\al,\be}\in S^{(|\al|-|\be|)/2}_{1,0}$ and $r\in S^{-2}_{1/2,0}$. 
\end{lem}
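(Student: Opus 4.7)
The plan is to read off the identity directly from what has already been established. Equation \eqref{eq:a_2} shows that the Friedrichs part $p_F$ is exactly the Weyl quantization of the explicit symbol
\[
q(x,\xi)=(2\pi)^{-n}\int e^{-iy\eta}p_F\Bigl(\xi+\tfrac{\eta}{2},x+y,\xi-\tfrac{\eta}{2}\Bigr)dy\,d\eta,
\]
i.e.\ $p_F={\rm Op}^w(q)$. The asymptotic expansion of this symbol was carried out in Lemma \ref{lem:a_1}, with the result
\[
q(x,\xi)=p(x,\xi)+\sum_{2\leq|\al+\be|\leq 3}\Psi_{\al,\be}(\xi)\,p^{(\al)}_{(\be)}(x,\xi)+r(x,\xi),\qquad r\in S^{-2}_{1/2,0},
\]
where $\Psi_{\al,\be}\in S^{(|\al|-|\be|)/2}_{1,0}$.

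The first step is therefore to insert this expansion into $p_F={\rm Op}^w(q)$ and use the linearity of the Weyl quantization to get
\[
p_F={\rm Op}^w(p)+{\rm Op}^w\!\Big(\sum_{2\leq|\al+\be|\leq 3}\Psi_{\al,\be}(\xi)\,p^{(\al)}_{(\be)}(x,\xi)\Big)+{\rm Op}^w(r).
\]
The second (and only remaining) step is to rearrange this equality by moving $p_F$ to the right and transposing the other two operator-valued terms, noting that $-r$ still belongs to $S^{-2}_{1/2,0}$. Renaming $-r$ as $r$ yields precisely the claimed identity.

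Thus no additional computation is needed: the substantive work is Lemma \ref{lem:a_1} and the representation \eqref{eq:a_2}, and there is no real obstacle here beyond bookkeeping of signs and the invariance of the symbol class $S^{-2}_{1/2,0}$ under negation.
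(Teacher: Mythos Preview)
Your proposal is correct and matches the paper's approach exactly: the paper presents this lemma as an immediate consequence of Lemma~\ref{lem:a_1} together with the identification $p_F={\rm Op}^w(q)$ from \eqref{eq:a_2}, and your write-up simply spells out the rearrangement (including the harmless sign flip on $r$). There is nothing to add.
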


\end{document}